\numberwithin{equation}{subsection}
\newtheorem{theorem}[subsection]{Theorem}
\newtheorem{lemma}[subsection]{Lemma}
\newtheorem{proposition}[subsection]{Proposition}
\newtheorem{corollary}[subsection]{Corollary}
\theoremstyle{definition}
\newtheorem{defn}[subsection]{Definition}
\newtheorem{conjecture}[subsection]{Conjecture}
\newtheorem*{conjecture*}{Conjecture}
\newtheorem*{theorem*}{Theorem}
\newtheorem*{claim*}{Claim}
\newtheorem*{corollary*}{Corollary}
\newcommand{\pdiv}{\mathscr{G}}
\newcommand{\fH}{\mathscr{H}}
\newcommand{\hdiv}{\mathscr{H}}
\newcommand{\D}{\mathbb{D}}
\newcommand{\F}{\mathbb{F}}
\newcommand{\Fpbar}{\overline{\mathbb{F}}_p}
\newcommand{\G}{\mathbb{G}}
\newcommand{\bH}{\mathbb{H}}
\newcommand{\bL}{\mathbb{L}}
\newcommand{\gn}{\mathfrak n}
\newcommand{\Q}{\mathbb{Q}}
\newcommand{\breveQ}{\breve{\Q}}
\newcommand{\Z}{\mathbb{Z}}
\newcommand{\cA}{\mathcal{A}}
\newcommand{\cO}{\mathcal{O}}
\newcommand{\cS}{\mathcal{S}}
\newcommand{\fm}{\mathfrak{m}}
\newcommand{\X}{\mathbb{X}^{\textrm{u}}}
\newcommand{\lps}{[\![}
\newcommand{\rps}{]\!]}
\newcommand{\et}{{\text{\'et}}}
\newcommand{\gS}{\mathfrak S}
\newcommand{\gM}{\mathfrak M}
\newcommand{\bilinear}{\langle \cdot,\cdot \rangle}
\newcommand{\cris}{{\mathrm{cris}}}
\newcommand{\Fil}{{\mathrm{Fil}}}
\DeclareMathOperator{\GL}{GL}
\DeclareMathOperator{\SO}{SO}
\DeclareMathOperator{\GSO}{GO}
\DeclareMathOperator{\GSpin}{GSpin}
\DeclareMathOperator{\opp}{opp}
\DeclareMathOperator{\Gal}{Gal}
\DeclareMathOperator{\End}{End}
\newcommand*{\sheafhom}{\mathscr{H}\kern -1.5pt om}
\DeclareMathOperator{\Aut}{Aut}
\DeclareMathOperator{\Lie}{Lie}
\DeclareMathOperator{\Res}{Res}
\DeclareMathOperator{\Spec}{Spec}
\DeclareMathOperator{\der}{der}
\DeclareMathOperator{\Isoc}{Isoc}
\DeclareMathOperator{\Rep}{Rep}
\DeclareMathOperator{\height}{height}
\DeclareMathOperator{\Spf}{Spf}
\DeclareMathOperator{\Frob}{Frob}
\renewcommand{\ker}{{\rm Ker}}
\providecommand{\keywords}[1]
{
  \small	
  \textbf{\textit{Keywords---}} #1
}
\begin{document}
\title{Finiteness of reductions of Hecke orbits}

\author{Mark Kisin}
\address{Current: M.~Kisin, Department of Mathematics, Harvard University,
1 Oxford St, Cambridge, MA 02138, USA}
\email{kisin@math.harvard.edu}

\author{Yeuk Hay Joshua Lam}
\address{Current: Y.H.J.~Lam, Department of Mathematics, Harvard University,
1 Oxford St, Cambridge, MA 02138, USA}
\email{ylam@g.harvard.edu}

\author{Ananth N.Shankar}
\address{Current: A.N.~Shankar, Department of Mathematics, University of Wisconsin, Madison, 480 Lincoln Drive
213 Van Vleck Hall, Madison, WI 53706.}
\email{ashankar@math.wisc.edu}

\author{Padmavathi Srinivasan}
\address{Current: P.~Srinivasan, School of Mathematics, University of Georgia, 452 Boyd Graduate Studies, 1023 D. W. Brooks Drive, Athens, GA 30602.}
\email{Padmavathi.Srinivasan@uga.edu}

\subjclass[2010]{Primary 11G15, 14K10, 14K22.}

\date{\today}

\begin{abstract}
We prove two finiteness results for reductions of Hecke orbits of abelian varieties over local fields: one in the case of supersingular reduction and one in the case of reductive monodromy. As an application, we show that only finitely many abelian varieties on a fixed isogeny leaf admit CM lifts, which in particular implies that  in each  fixed dimension $g$ only finitely many supersingular abelian varieties admit CM lifts. Combining this with the Kuga-Satake construction, we also show that only finitely many supersingular $K3$-surfaces admit CM lifts. Our tools include $p$-adic Hodge theory and  group theoretic techniques. 
\end{abstract}

\keywords{Hecke orbits, CM lifts}

\maketitle

\section{Introduction}
Let $\bar{A}$ be an abelian variety over $\Fpbar.$ When $\bar{A}$ is \emph{ordinary}, then $\bar{A}$ admits a canonical (CM) lift, and every isogeny from $\bar{A}$ lifts to an isogeny in characteristic zero with source any fixed lift of $\bar{A}$. The aim of this paper is to show that the situation is radically different for {\em supersingular} 
abelian varieties. In fact, we prove a general theorem for all Newton strata that interpolates between the ordinary case and the supersingluar case.

The first example of an abelian variety over $\Fpbar$ without a CM lift was given by Oort in \cite{oortjag}. Further examples of such abelian varieties, including supersingular abelian varieties, were then constructed by Conrad, Chai and Oort in \cite{conradcmlifting}. We prove that such examples are in fact quite abundant.

\begin{theorem}\label{thm:introssthm}
Only finitely many supersingular abelian varieties of a given dimension admit CM lifts.
\end{theorem}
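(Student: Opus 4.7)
The plan is to bootstrap from the sharper statement announced in the abstract: only finitely many points of a fixed isogeny leaf in the supersingular locus of $\mathcal{A}_g \otimes \Fpbar$ admit CM lifts. Granting this, Theorem~\ref{thm:introssthm} follows from a classical theorem of Oort: the supersingular locus of $\mathcal{A}_g \otimes \Fpbar$ is a finite union of isogeny leaves. Every supersingular abelian variety of dimension $g$ lies on one of these finitely many leaves, and if each leaf contributes only finitely many CM-liftable points, summing gives the global finiteness claimed.

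The real content is then the isogeny-leaf version, which I would attack using the first of the paper's two main finiteness theorems (the case of Hecke orbits with supersingular reduction). Concretely, fix an isogeny leaf $\mathcal{I}$ and suppose $\bar{A} \in \mathcal{I}(\Fpbar)$ has a CM lift $A$ over the integers of a $p$-adic field $K$. The prime-to-$p$ Hecke translates of $A$ in $\mathcal{A}_g(\bar{K})$ are again CM abelian varieties, and because prime-to-$p$ isogenies preserve the $p$-divisible group up to isomorphism, their reductions all lie on the central leaf through $\bar{A}$, hence on $\mathcal{I}$. The supersingular Hecke-orbit finiteness theorem then asserts that these reductions form a finite subset of $\mathcal{I}(\Fpbar)$.

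To finish, I would check that the CM-liftable points of $\mathcal{I}$ are all captured in this way by a bounded number of base CM lifts. This reduces to standard structural finiteness for CM data: in dimension $g$ there are only finitely many isomorphism classes of CM algebras $E$, since $[E:\mathbb{Q}] \le 2g$, and for each $E$ only finitely many CM types $\Phi$; given $(E,\Phi)$, the set of $(E,\Phi)$-CM abelian varieties up to prime-to-$p$ isogeny is a torsor under a quotient of $T_E(\mathbb{A}_f^p)$ by a compact open subgroup, hence is a single prime-to-$p$ Hecke orbit once a polarization is fixed. Therefore finitely many base CM lifts suffice to cover every CM-liftable point on $\mathcal{I}$, and applying the supersingular Hecke-orbit finiteness theorem to each one yields the desired finiteness.

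The hard part is of course the supersingular Hecke-orbit finiteness theorem itself, which is the technical heart of the paper and, per the abstract, rests on $p$-adic Hodge theory together with group-theoretic input; the deduction sketched above is essentially a packaging exercise, combining Oort's decomposition of the supersingular locus into finitely many isogeny leaves with elementary finiteness properties of CM data in bounded dimension.
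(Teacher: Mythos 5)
Your reduction founders at the step where you assert that there are ``only finitely many isomorphism classes of CM algebras $E$, since $[E:\mathbb{Q}]\le 2g$.'' This is false: already for $g=1$ there are infinitely many imaginary quadratic fields, and infinitely many of them (those in which $p$ does not split) produce CM elliptic curves over $\bar{\mathbb{Q}}_p$ with supersingular reduction. Since CM abelian varieties with non-isomorphic CM fields are never isogenous, the characteristic-zero CM lifts you are trying to capture fall into \emph{infinitely} many Hecke orbits, and no bounded collection of ``base CM lifts'' can cover them; the covering step of your argument cannot be repaired while you work with global CM fields. This is exactly why the paper descends to the $p$-divisible group: its CM algebra $F$ is a commutative semisimple $\mathbb{Q}_p$-algebra of dimension equal to the height, and there \emph{are} only finitely many such algebras of bounded dimension (finitely many extensions of $\mathbb{Q}_p$ of bounded degree), with finitely many CM types each. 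Two CM $p$-divisible groups with the same $(F,\Phi)$ are $F$-linearly isogenous by Conrad--Chai--Oort, so the $p$-divisible groups of all CM lifts lie in finitely many isogeny classes; and since the monodromy of a CM $p$-divisible group lands in the torus $\Res_{F/\Q_p}\G_m$, the paper invokes the reductive-monodromy finiteness theorem (Corollary~\ref{newtred}), not the supersingular one, at this point. The passage back to abelian varieties is via Oort's foliation: finitely many possible $p$-divisible groups over $\Fpbar$ confine the CM-liftable locus to finitely many \emph{central} leaves, and supersingular central leaves are zero-dimensional, hence finite.

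Two further inaccuracies, less central but worth flagging. First, you conflate isogeny leaves with central leaves: the reduction of a prime-to-$p$ Hecke translate of $A$ lies on the central leaf through $\bar A$, which is not contained in the isogeny leaf $\mathcal{I}$ (in Oort's theory these are essentially transverse), so ``hence on $\mathcal{I}$'' is a non sequitur; the finite union doing the work in the paper is a union of central leaves, with their zero-dimensionality in the supersingular stratum supplying finiteness. Second, even for a fixed $(E,\Phi)$, the $(E,\Phi)$-CM abelian varieties need not form a single prime-to-$p$ Hecke orbit, since two such varieties can differ by an $E$-linear isogeny supported at $p$; the paper's local formulation sidesteps this because its Hecke orbits $J(\pdiv)$ allow arbitrary isogenies of $p$-divisible groups.
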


Combining this with a refined analysis of the Kuga-Satake construction, we are able to answer a question of Ito-Ito-Koshikawa \cite[Remark 1.3]{itocmlift}
\begin{theorem}\label{C:introK3thm} If $p \geq 5,$ then only finitely many supersingular $K3$-surfaces over $\Fpbar$ admit CM lifts.
\end{theorem}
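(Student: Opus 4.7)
The strategy is to combine Theorem~\ref{thm:introssthm} with the Kuga-Satake construction, which for $p \geq 5$ is controlled integrally by Madapusi Pera's smooth canonical model of the $\GSpin(2,19)$-Shimura variety. Let $X$ be a supersingular K3 surface over $\Fpbar$ admitting a CM lift $X_K$ to a finite extension $K$ of $W(\Fpbar)[1/p]$.

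First I would produce a polarized K3 of controlled degree. Since the N\'eron--Severi lattice of a supersingular K3 has discriminant $-p^{2\sigma_0}$ with Artin invariant $\sigma_0 \le 10$, one may choose a primitive polarization $\lambda$ on $X$ of degree $2d$ with $d$ coprime to $p$ and bounded by an absolute constant $D_0$. By deformation of line bundles $\lambda$ extends to a polarization $\lambda_K$ on $X_K$. Applying the integral Kuga-Satake construction to $(X, \lambda)$ and $(X_K, \lambda_K)$ produces a supersingular abelian variety $A := A_{KS}(X, \lambda)$ of fixed dimension $N = 2^{20}$, together with a lift $A_K$ to characteristic zero. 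Since $X_K$ is a CM K3, $A_K$ is a CM abelian variety by the standard compatibility of Kuga--Satake with Hodge structures, so $A$ admits a CM lift in the sense required by Theorem~\ref{thm:introssthm}. That theorem then bounds the number of such $A$'s, uniformly as $d$ ranges over $d \le D_0$.

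To conclude, one recovers finitely many K3 surfaces from finitely many abelian varieties. The enriched Kuga-Satake map --- sending $(X, \lambda)$ to $A$ together with its Clifford algebra action, spin level structure, and polarization --- is finite-to-one; in the supersingular setting this is a consequence of Ogus' crystalline Torelli theorem, because the F-crystal on the primitive part of $H^2_{\cris}(X)$ is cut out by the even Clifford action inside (an appropriate tensor construction on) the F-crystal of $A$.

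The main obstacle is the translation of ``CM lift'' between $X$ and $A$. One needs to verify that the CM lift of $X$ produces a lift $A_K$ of $A$ whose endomorphism algebra $\End^0(A_K)$ contains a commutative semisimple subalgebra of rank $2 \dim A$, arising from Clifford multiplication by the CM field of the transcendental lattice of $X_K$, and that this witnesses $A$ as being CM in the strong isogeny-invariant sense demanded by Theorem~\ref{thm:introssthm}. A secondary delicate point is the need to bound the polarization degree $d$ uniformly, so that the argument runs on a \emph{fixed} finite union of Shimura varieties of levels coprime to $p$ rather than letting $d$ grow, which is where the hypothesis $p \geq 5$ (ensuring smoothness of Madapusi Pera's models for all relevant $d \leq D_0$) is used.
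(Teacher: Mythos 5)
There is a genuine gap at the step ``By deformation of line bundles $\lambda$ extends to a polarization $\lambda_K$ on $X_K$.'' A line bundle on the special fibre of a family of K3 surfaces does not in general extend to a prescribed lift: the locus in the $20$-dimensional deformation space where a given class remains algebraic is a divisor, and for supersingular $X$ the Picard rank drops from $22$ in characteristic $p$ to at most $20$ in characteristic zero, so most classes of $\mathrm{NS}(X)$ --- in particular your chosen $\lambda$ of bounded degree --- will not extend to the given CM lift $X_K$. What is true is that $X_K$ (if it is even algebraizable) carries \emph{some} polarization specializing to a polarization of $X$, but its degree is governed by the lattice $\mathrm{Pic}(X_K)$, which depends on the CM lift and admits no a priori bound as $X$ ranges over all supersingular K3s with CM lifts. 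You flag the uniform bound on $d$ as a ``secondary delicate point,'' but it is in fact the fatal one: without it the argument runs over infinitely many Shimura varieties of growing degree, and no finiteness follows. This is precisely the obstruction the paper points out right after Theorem \ref{C:introK3thm}: the statement ``makes no mention of polarizations, so does not simply follow from the Kuga--Satake map for integral models of Shimura varieties.''

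The paper's route avoids polarizations entirely by using Yang's Kuga--Satake construction for K3 \emph{crystals}, which attaches to $X$ a supersingular $p$-divisible group $\hdiv$ with $\GSpin$-structure with no polarization input. Proposition \ref{CMforK3impliesCMforpdiv} shows that a CM lift of $X$ produces a CM lift of $\hdiv$ (via the identification of the deformation space of $X$ with the formal opposite unipotent and a Tannakian comparison of weakly admissible modules); Theorem \ref{thm:cmliftpdiv}, the $p$-divisible group analogue of Theorem \ref{thm:introssthm}, then gives finiteness of the possible $\hdiv$; and Lemma \ref{lemma:finitetensor} together with Ogus' classification of decent supersingular K3 crystals over a fixed finite field and his crystalline Torelli theorem recover finiteness of the $X$. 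Your closing appeal to a finite-to-one enriched Kuga--Satake map via crystalline Torelli is in the right spirit (the paper must likewise rule out infinitely many $X$ sharing one Kuga--Satake $p$-divisible group), but it cannot rescue the earlier step. To keep your abelian-variety formulation you would need to show that every supersingular K3 with a CM lift admits a CM lift to which a polarization of uniformly bounded degree extends, which is not known and is essentially as hard as the unpolarized workaround.
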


We remark that the theorem above makes no mention of polarizations, so does not simply follow from the Kuga-Satake  map for integral models of Shimura varieties. It instead requires an analysis of a Kuga-Satake construction at the level of $p$-divisible groups due to Yang \cite{ziquansymplectic}.

Our results for general Newton strata use the notion of central leaves introduced by Oort in \cite[Theorem~5.3]{oortfoliation}.
The central leaf through $\bar{A}$ is a closed subvariety inside its Newton stratum, and essentially consists of all abelian varieties whose $p$-divisible group is geometrically isomorphic to that of $\bar{A}$. In \cite[Section 5]{oortdimension}, Oort computes the dimensions of central leaves and shows that they are $0$-dimensional in the supersingular stratum, and equal the entire Newton stratum in the ordinary case. 

\begin{theorem}\label{thm:introleavesthm}
Let $W$ be a Newton stratum in the moduli space of principally polarized abelian varieties in characteristic $p$. The set of points of $W$ admitting CM lifts is contained in a finite union of central leaves.
\end{theorem}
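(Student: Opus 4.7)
The plan is to combine the reductive-monodromy case of the main Hecke-orbit reduction theorem of the paper (advertised in the abstract) with a finiteness statement for CM data at the prime $p$. Given $\bar A \in W(\Fpbar)$ admitting a CM lift $A$ over some $p$-adic local field $K$, the CM structure forces the Mumford--Tate group of $A$ to be a torus, hence reductive. The reductive-monodromy Hecke-orbit theorem therefore applies to $A$ and forces the reduction of the entire prime-to-$p$ Hecke orbit of $A$ to lie in a single central leaf of $W$, which must be the central leaf $C(\bar A) \subset W$ through $\bar A$ itself.

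The central leaf $C(\bar A)$ is determined by the isomorphism class of the principally polarized $p$-divisible group $\bar A[p^\infty]$. It thus suffices to show that this isomorphism class admits only finitely many possibilities as $\bar A$ ranges over CM-lifting points of $W$. If $A$ has CM by a CM algebra $L$, then $\bar A[p^\infty]$ inherits an action of $L_p := L \otimes_{\Q} \Q_p$, and the associated $p$-adic CM type $\Phi_p$ recovers the (fixed) Newton polygon $\beta$ of $W$. Since $[L_p : \Q_p] = 2g$, there are only finitely many semisimple $\Q_p$-algebras available for $L_p$, and for each of these only finitely many CM signatures $\Phi_p$ give rise to $\beta$.

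For each such pair $(L_p, \Phi_p)$, Dieudonn\'e theory (together with a count of compatible principal polarizations, which should form finitely many orbits under $L_p^{\times}$) produces only finitely many isomorphism classes of polarized $p$-divisible groups over $\Fpbar$ arising as reductions of $(L_p, \Phi_p)$-CM $p$-divisible groups in characteristic zero. Summing over the finitely many pairs $(L_p, \Phi_p)$, only finitely many isomorphism classes of $\bar A[p^\infty]$ occur, so CM-lifting points of $W$ are contained in a finite union of central leaves, as claimed. The principal obstacle is executing the last finiteness step rigorously --- in particular tracking the polarization data and confirming that compatible principal polarizations on a fixed $(L_p, \Phi_p)$-CM Dieudonn\'e module exhaust only finitely many isomorphism classes; the conceptual heart of the proof is the reductive-monodromy Hecke-orbit theorem, which we invoke as a black box.
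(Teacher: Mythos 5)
Your overall shape --- finitely many $p$-adic CM data, then finiteness for each datum, then pass to central leaves --- is close to the paper's, but the two places where you actually deploy your tools are the two places the argument goes wrong or stops. First, the opening application of the reductive-monodromy Hecke-orbit theorem to the abelian variety $A$ buys nothing: the prime-to-$p$ Hecke orbit of $\bar A$ lies in the central leaf $C(\bar A)$ for the trivial reason that prime-to-$p$ isogenies do not change the polarized $p$-divisible group, and in any case the set $S$ of CM-liftable points of $W$ is not a single Hecke orbit, so no statement about the orbit of one $A$ can cover $S$. (Also note that the paper's Hecke-orbit theorem for abelian varieties is only proved for supersingular reduction; the reductive/semisimple version, Theorem~\ref{newtred}, is a statement about $p$-divisible groups.) The real job of the reductive-monodromy theorem is in your \emph{last} step, which you flag as ``the principal obstacle'' and leave open. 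The missing chain is: for a fixed CM algebra $F$ and $p$-adic CM type $\Phi$, any two CM $p$-divisible groups over $\cO_K$ with datum $(F,\Phi)$ are $F$-linearly isogenous (\cite[Proposition 3.7.4]{conradcmlifting}); one of them descends to a genuine finite extension $K'/\Q_p$ (Lemma~\ref{lem:descenttolocal}); its Galois image has Zariski closure inside the torus $\Res_{F/\Q_p}\G_m$, hence reductive; so Theorem~\ref{newtred} makes the set of reductions $J(\pdiv',\Fpbar)$ finite. Without this chain your ``Dieudonn\'e theory produces only finitely many isomorphism classes'' is an assertion, not a proof --- and it is exactly the content of Theorem~\ref{thm:cmliftpdiv}.

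Second, your plan to control the \emph{polarized} isomorphism class of $\bar A[p^\infty]$ by counting orbits of principal polarizations under $L_p^\times$ is both unexecuted and unnecessary. The paper never counts polarizations: it proves finiteness of the set of \emph{unpolarized} $p$-divisible groups in a fixed isogeny class admitting a CM lift, and then invokes Oort's results (\cite[Theorems 2.2 and 3.3]{oortfoliation}) that the locus in $\cA_g[n]$ where the unpolarized $p$-divisible group has a fixed geometric isomorphism class is closed in the Newton stratum and is a finite union of (polarized) central leaves. That is the device that converts an unpolarized finiteness statement into containment in finitely many central leaves, and it is absent from your proposal. One more small point: since all $p$-divisible groups over $\Fpbar$ with a given Newton polygon are isogenous (Dieudonn\'e--Manin), the Newton stratum corresponds to a single isogeny class, which is why the single-isogeny-class finiteness of Theorem~\ref{thm:cmliftpdiv} suffices; your write-up never reduces to one isogeny class.
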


Let $A$ be an abelian variety over a field $K,$ with algebraic closure $\bar K.$  
By the \emph{Hecke orbit} of an abelian variety $A$, we mean the set of isomorphism classes of abelian varieties over $\bar K,$ which are isogenous to $A_{\bar K}.$  
If $A$ is defined over a local field, and has good \emph{reduction} $\bar A,$ then the image of the Hecke orbit of $A$ in the Hecke orbit of $\bar A$ is called the {\em reduction} of the former. We prove the following result, which is in stark contrast to the ordinary case. 
This is the key input for proving the CM lifting theorems stated earlier.
\begin{theorem}\label{thm:introisothm}
Let $A$ be an abelian variety over a characteristic zero local field, and suppose $A$ has good supersingular reduction. 
Then the  reduction of its Hecke orbit is finite.
\end{theorem}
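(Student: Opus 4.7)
My plan is to combine Oort's theory of central leaves with integral $p$-adic Hodge theory. Since every supersingular central leaf is $0$-dimensional (as cited in the introduction) and the ambient moduli is of finite type, each such leaf is a finite set. So it suffices to show that the reduction of the Hecke orbit meets only finitely many central leaves. Every isogeny decomposes as a $p$-power isogeny followed by a prime-to-$p$ isogeny; since prime-to-$p$ isogenies induce isomorphisms on $p$-divisible groups, the prime-to-$p$ part of the Hecke action preserves the central leaf of a given point. Hence the theorem reduces to showing that as $A'$ ranges over the $p$-power Hecke orbit of $A$, only finitely many isomorphism classes of $\bar{A'}[p^\infty]$ appear over $\bar k$.

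To analyze the $p$-power orbit I would use integral $p$-adic Hodge theory. Each such $A'$ has good reduction by N\'eron--Ogg--Shafarevich, and $A'[p^\infty]$ is a $p$-divisible group over $\cO_K$ in the rational isogeny class of $A[p^\infty]$. Via Kisin's equivalence, such $p$-divisible groups correspond to Breuil--Kisin modules $\fM'$ which are $\gS$-lattices inside $\fM(A[p^\infty])[1/p]$ compatible with the Hodge filtration, where $\gS = W(k)\lps u \rps$. The special fiber is recovered as the Dieudonn\'e module $\fM'/u\fM' \subset D := M(\bar A[p^\infty])[1/p]$, the latter being isoclinic of slope $1/2$ since $\bar A$ is supersingular. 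Two such Dieudonn\'e lattices give isomorphic $p$-divisible groups over $\bar k$ iff they lie in the same orbit of $J(\Q_p) := \Aut(D,\phi)$.

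I would conclude using the Rapoport--Zink uniformization of the supersingular locus: the reduction of the Hecke orbit lies inside $I(\Q)\backslash [\cM^{\mathrm{red}}(\bar k) \times G(\A_f^p)/K^p]$, where $I$ is an inner form of $G$ and $\cM$ is the basic Rapoport--Zink space. Strong approximation for $I$ gives finiteness of the double coset $I(\Q)\backslash G(\A_f^p)/K^p$, reducing the claim to: the image in $\cM^{\mathrm{red}}(\bar k)$ of the $G(\Q_p)$-Hecke orbit of $A[p^\infty]$ meets only finitely many $J(\Q_p)$-orbits. This last step is the main obstacle, and constitutes the genuinely $p$-adic input. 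I expect it to follow from the rigidity of admissible Breuil--Kisin lattices inside an isoclinic rational isocrystal: the constant-slope Frobenius combined with admissibility relative to the fixed Hodge filtration (whose polygon $\{0,1\}$ differs maximally from the Newton polygon of constant slope $1/2$) should force the resulting Dieudonn\'e lattice to vary in a bounded family modulo $J(\Q_p)$, yielding the desired finiteness.
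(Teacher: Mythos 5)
Your opening reduction---that it suffices to show only finitely many isomorphism classes of $p$-divisible groups $\bar A'[p^\infty]$ occur among the reductions---is sound in outline (though note that members of the Hecke orbit need not be principally polarizable, so placing their reductions on central leaves of $\cA_g$ needs an extra device such as Zarhin's trick; the paper avoids leaves entirely here and simply counts abelian varieties over a finite field). The genuine gap is exactly where you write ``this last step is the main obstacle'': you never prove that the specializations of the $p$-power Hecke orbit meet only finitely many $J(\Q_p)$-orbits, and the heuristic you offer cannot supply a proof. ``Varying in a bounded family modulo $J(\Q_p)$'' is not finiteness: in the basic Rapoport--Zink space the quotient $\cM^{\mathrm{red}}(\bar k)/J(\Q_p)$ is already bounded (finitely many components modulo $J(\Q_p)$) yet infinite, because the components are positive-dimensional---this is precisely the Moret-Bailly phenomenon recalled in the introduction, whereby the mod $p$ isogeny class contains positive-dimensional families. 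Moreover, every point of $\cM^{\mathrm{red}}(\bar k)$ is the specialization of \emph{some} admissible lattice, so ``admissibility relative to the fixed Hodge filtration'' restated for the fixed filtered isocrystal is literally the assertion to be proved, not an input. Finally, the isogenies in the Hecke orbit are a priori defined only over varying finite extensions $K'/K$, whose residue fields you must control before any lattice-counting over a fixed $\gS$ can begin; this issue is not addressed.

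The paper supplies the missing arithmetic input by a different and more elementary route. Proposition \ref{prop:usingdecent} shows that for decent reduction (in particular supersingular reduction, where the $q$-Frobenius eigenvalues are $q^{1/2}$ times roots of unity) the Zariski closures $G$ and $H$ of $\rho(G_K)$ and $\rho(I_K)$ in $\GL(T_p\pdiv[1/p])$ satisfy $[G:H]<\infty$: a faithful representation of $G/H$ is unramified, so its crystalline Frobenius eigenvalues are $p$-adic units, while the Tannakian formalism forces them to be rational powers of $q$, hence roots of unity. By the Sen--Serre argument of Lemma \ref{zariskiclosure} this gives $[\rho(G_K):\rho(I_K)]<\infty$, so after a finite base change the splitting field $K_\rho$ is totally ramified over $K$; every isogeny out of $A$ is defined over $K_\rho$, hence every reduction is defined over the \emph{fixed} finite residue field $\F_q$, and finiteness follows from Zarhin's trick (for $p$-divisible groups, from Oort's result that the $p^n$-torsion determines the isomorphism class). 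If you wish to salvage your approach, the statement you still owe is exactly Corollary \ref{unramtofinite}, and nothing in your proposal yet proves it.
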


This theorem answers a question posed by Poonen in an unpublished preprint, and also makes progress towards understanding the $p$-adic distribution of Hecke orbits.  The proof of Theorem~\ref{thm:introisothm} is entirely local, and we prove an analogous theorem in the setting of $p$-divisible groups first. 

Now suppose that $K$ is a characteristic zero local field with ring of integers $\cO_K$ and residue field $k.$ 
For a $p$-divisible group $\pdiv$ over $\cO_K,$ 
we define the Hecke orbit of $\pdiv$ and its reduction in the same way as above. In particular, the reduction is a collection of isomorphism 
classes of $p$-divisible groups over an algebraic closure of $k.$ 
We also establish a finiteness theorem under a semisimplicity hypothesis on the $p$-adic Galois representation.
\begin{theorem}[Theorem~\ref{newtred}]\label{thm:intropdivthm}
Let $\pdiv$ denote a $p$-divisible group over $\cO_K,$ such that the $p$-adic Galois representation associated to $\pdiv$ is semisimple. Then the reduction of the Hecke orbit of $\pdiv$ is finite.
\end{theorem}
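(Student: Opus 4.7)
The plan is to translate the problem via $p$-adic Hodge theory into a question about lattices in the rational Tate module and rational Dieudonné module, and then exploit the reductivity of the $p$-adic monodromy that comes from semisimplicity. Set $V := V_p\pdiv$, $T := T_p\pdiv$, and $M_0 := \mathbb{D}(\pdivred)[1/p]$, the rational Dieudonné module of the special fiber. The crystalline comparison provides a canonical isomorphism
\[
V \otimes_{\Q_p} B_{\mathrm{cris}} \;\cong\; M_0 \otimes_{K_0} B_{\mathrm{cris}},
\]
compatible with Frobenius and filtrations, where $K_0 = W(k)[1/p]$. This will be the central tool.

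First, I would identify isomorphism classes in the Hecke orbit of $\pdiv$, viewed as $p$-divisible groups over $\mathcal{O}_{\overline K}$, with $\Z_p$-lattices $T' \subset V$, modulo the action of $(\End^0 \pdiv_{\overline K})^\times$. Any such $T'$ is stabilized by some open subgroup $G_L \subset G_K$ with $L/K$ finite; by Fontaine's equivalence between crystalline representations and $p$-divisible groups over $\mathcal{O}_L$, the lattice $T'$ corresponds to a $p$-divisible group $\pdiv_{T'}/\mathcal{O}_L$ with good reduction $\overline{\pdiv_{T'}}$ over $\overline k$, isogenous to $\pdiv_{\mathcal{O}_L}$. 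Via the crystalline comparison, the Dieudonné module of $\overline{\pdiv_{T'}}$ is identified with a specific $W(\overline k)$-lattice $M_{T'} \subset M_0 \otimes_{K_0}\breveQ$ determined explicitly by $T'$. Two lattices yield isomorphic reductions precisely when the resulting $M_{T'}$ lie in the same orbit of the (compact) integral automorphism group $\Aut(\pdivred)^\times$ acting on lattices of $M_0$.

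Next I would exploit semisimplicity. The Zariski closure $\mathcal{G} \subset \GL_{\Q_p}(V)$ of $\rho(G_K)$ is then reductive, and the Hodge cocharacter $\mu \colon \Gm \to \mathcal{G}_{\overline{\Q}_p}$ arising from the crystalline structure lies in a single $\mathcal{G}$-conjugacy class. The Cartan decomposition $\mathcal{G}(\Q_p) = K' \cdot \mu(\Q_p^\times) \cdot K'$, relative to a maximal compact subgroup $K' \supset \rho(G_K)$, implies that any lattice $T' = g \cdot T$ with $g \in \mathcal{G}(\Q_p)$ is $K'$-equivalent (hence produces the same reduction as) some $\mu(p^n) \cdot T$ with $n \in \Z$. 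Transported through the crystalline comparison, these $\mu(p^n)$-translates lie in a single orbit of $\mu(\Q_p^\times) \subset (\End^0\pdivred)^\times$ on lattices of $M_0$, and since $\mu(\Q_p^\times)$ is generated by a cocharacter whose $K'$-orbit is compact modulo $\Aut(\pdivred)^\times$, only finitely many distinct $\Aut(\pdivred)^\times$-classes can arise.

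The main obstacle is this last step: one must precisely match the Hodge cocharacter $\mu$ on the $V$-side with its image on the $M_0$-side via the crystalline comparison, and then verify that the $\mu(\Q_p^\times)$-translates on the Dieudonné side collapse to a finite set modulo the integral automorphism group $\Aut(\pdivred)^\times$. This amounts to showing that the noncompact part of $\mu(\Q_p^\times)$ is absorbed by the central torus of $\End^0(\pdivred)^\times$, which acts on lattices in $M_0$ by $p$-power homotheties and hence trivially modulo $\Aut(\pdivred)^\times$. The semisimplicity hypothesis is essential: if $\mathcal{G}$ contained a nontrivial unipotent radical, the Cartan decomposition would not apply, lattice translates under unipotents would produce infinitely many non-isomorphic Dieudonné lattices, and the finiteness conclusion would fail.
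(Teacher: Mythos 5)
Your parametrization of the Hecke orbit by $\Z_p$-lattices $T'\subset V=V_p\pdiv$ modulo quasi-isogeny automorphisms is reasonable, but the core of the argument has gaps I do not see how to repair. First, after correctly noting that \emph{every} lattice in $V$ occurs (each is stabilized by an open subgroup of $G_K$), you then only treat lattices of the form $g\cdot T$ with $g\in\mathcal{G}(\Q_p)$, where $\mathcal{G}$ is the Zariski closure of $\rho(G_K)$. That single orbit is a small subset of the set of all lattices (already when $\mathcal{G}$ is a torus, as in the CM case), so the reduction step says nothing about most of the Hecke orbit. Second, the claimed decomposition $\mathcal{G}(\Q_p)=K'\cdot\mu(\Q_p^\times)\cdot K'$ with $\mu$ the single Hodge cocharacter is not the Cartan decomposition and is false in general: Cartan requires the full dominant cone of a maximal split torus, and arbitrary lattices sit in arbitrary relative position to $T$, not just position $\mu^n$. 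Third, the Hodge cocharacter is not central in $\mathcal{G}$ except in degenerate cases, so the ``absorbed by the central torus'' step fails. Finally, your closing claim that finiteness \emph{would fail} without semisimplicity is contradicted by the paper itself: Theorem \ref{thm:introisothm} proves finiteness for supersingular reduction with no semisimplicity hypothesis, and Conjecture \ref{conj:pdivorbit} predicts finiteness always holds.

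The paper's actual mechanism is quite different and does not pass through the crystalline comparison or lattice combinatorics at all. Semisimplicity makes $G$ reductive, so the quotient torus $T=G/H$ (with $H$ the Zariski closure of $\rho(I_K)$) lifts up to isogeny to a subtorus of the center $Z_G$; one then twists $\pdiv$ by an unramified character valued in $Z_G(\Q_p)\cap\Aut(T_p\pdiv)$ to produce $\pdiv'$, isomorphic to $\pdiv$ over $\cO_{\breve{K}}$, whose full Galois image has the same Zariski closure as its inertia image (Lemma \ref{lemma:centraltwist}). For such a $\pdiv'$ the splitting field of $\rho$ is totally ramified over $K$, so every member of the Hecke orbit is defined over a totally ramified extension and its reduction is defined over the fixed residue field $\F_q$; Oort's result that a height-$h$ $p$-divisible group over $\Fpbar$ is determined by its $p^{n(h)}$-torsion then yields finiteness (Proposition \ref{propidea} and Lemma \ref{oortvasiu}). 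If you want to salvage your approach, the step you must supply is a finiteness statement for \emph{all} lattices in $V$ up to the action of the integral automorphisms of the reduction, which is precisely where the difficulty of the general conjecture lies.
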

 
In light of Theorem~\ref{thm:introisothm} and Theorem~\ref{thm:intropdivthm}, we make the following conjecture.
\begin{conjecture}\label{conj:pdivorbit}
For any $p$-divisible group $\pdiv$ over $\cO_K$, the reduction of its Hecke orbit is finite. 
\end{conjecture}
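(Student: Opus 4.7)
The plan is to remove the semisimplicity hypothesis from Theorem~\ref{thm:intropdivthm} by devissage along the slope filtration of $\pdiv_{\bar k}$. Via Kisin's classification, the isogeny class of $\pdiv$ is parametrized by Breuil-Kisin lattices inside a fixed rational Breuil-Kisin module, and the reduction of the Hecke orbit corresponds to a set of Dieudonn\'e lattices inside the isocrystal of $\pdiv_{\bar k}$. To prove the conjecture it suffices to show that these Dieudonn\'e lattices lie in finitely many orbits under the automorphism group of the isocrystal over $\bar k$.

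The first step is to exploit the canonical slope filtration $0 = \pdiv_{\bar k}^{(0)} \subsetneq \cdots \subsetneq \pdiv_{\bar k}^{(r)} = \pdiv_{\bar k}$ on the special fiber. Each member of the reduction of the Hecke orbit inherits such a filtration with graded pieces isogenous to those of $\pdiv_{\bar k}$; these graded pieces are isoclinic. In the isoclinic case one expects finiteness to follow from a combination of Theorem~\ref{thm:introisothm} (whose proof strategy should extend beyond slope $1/2$, since the key input is simplicity of the isocrystal) and Theorem~\ref{thm:intropdivthm} applied to the individual graded pieces, where the crystalline Galois representation is much more constrained than in the general mixed-slope setting.

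The main obstacle is controlling the extension data. The slope filtration on $\pdiv_{\bar k}$ does not lift canonically to $\cO_K$, so a generic-fiber isogeny can change any chosen integral lift of the filtration, and the Galois representation may be non-semisimple precisely along this lifted filtration. One would need to show that nonetheless only finitely many reduction classes of extensions arise, likely by combining $p$-adic Hodge-theoretic bounds on $\mathrm{Ext}^1$ groups of Breuil-Kisin modules with a group-theoretic analysis of the unipotent radical of the crystalline monodromy group acting on the isocrystal. The integrality constraint---that the Breuil-Kisin lattices come from honest $p$-divisible groups over $\cO_K$---should bound the unipotent part of the Hecke orbit's image in the isocrystal's automorphism group, and combined with the graded-piece step should yield finiteness of orbits modulo the full automorphism group.
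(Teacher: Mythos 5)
This statement is Conjecture~\ref{conj:pdivorbit}: the paper offers no proof of it, only the two special cases recorded as Theorem~\ref{thm:introisothm} (supersingular reduction, via decency) and Theorem~\ref{thm:intropdivthm} (semisimple Galois representation, via the central twist of Lemma~\ref{lemma:centraltwist}). So there is no argument in the paper to compare yours against, and what you have written is a strategy outline rather than a proof: the decisive step is the one you yourself defer with ``one would need to show that only finitely many reduction classes of extensions arise.'' That step is essentially the entire content of the conjecture. The Moret-Bailly phenomenon the authors cite shows that varying extension data already produces positive-dimensional families inside a single isogeny class in characteristic $p$, so any finiteness must come from the constraint that the lattices lift to $p$-divisible groups over $\cO_K$; no bound on $\mathrm{Ext}^1$ of Breuil--Kisin modules or on the unipotent radical is actually produced, and none follows from the results in the paper.

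Two more concrete problems with the d\'evissage. First, the slope filtration lives on the special fiber and does not lift canonically to $\cO_K$, as you note --- but this means the graded pieces of $\pdiv_{\bar k}$ do not correspond to subquotients of the Galois representation $\rho_{\pdiv}$, so ``applying Theorem~\ref{thm:intropdivthm} to the individual graded pieces'' is not a well-defined operation: there is no $p$-divisible group over $\cO_K$ whose generic fiber realizes a given graded piece. Second, your claim that the supersingular argument extends to all isoclinic slopes because ``the key input is simplicity of the isocrystal'' misidentifies the input of Proposition~\ref{prop:usingdecent}: what is used is \emph{decency}, i.e.\ that $\Frob_q$ acts semisimply with eigenvalues that are rational powers of $q$. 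An isoclinic reduction need not be decent over the residue field of $K$ (already the \'etale part of an ordinary abelian variety has Frobenius eigenvalue a unit Weil number that is not a root of unity), so $[G:H]$ can be infinite for isoclinic pieces and Proposition~\ref{propidea} does not apply to them. The proposal therefore does not close the gap between the two proved cases and the general statement.
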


By Theorem \ref{thm:introssthm} and Theorem \ref{thm:intropdivthm}, we know that Conjecture \ref{conj:pdivorbit} holds in the case of supersingular reduction (without any semisimplicity conditions on the Galois representation), and in the case that the Galois representation is semisimple (without any condition on the Newton polygon of $\pdiv$). We remark that unlike the situation in characteristic zero the Hecke orbit of $\bar{A}$ can contain 
positive dimensional families\footnote{This always happens unless the $p$-rank of the $g$-dimensional abelian variety is equal to $g$ or $g-1$.}. This was first observed by Moret-Bailly \cite{moretbailly}, who constructed a complete family of supersingular abelian surfaces over $\mathbb{P}^1_{\mathbb{F}_p}$, such that all fibers are $p$-isogenous. 

\textbf{Outline of the paper:}
 In \S~\ref{FinitenessThms}, we prove our results on finiteness of reductions of Hecke orbits. This is done using a Galois-theoretic 
 result which relies on work of Sen and Serre. We apply this in \S~\ref{section:cmlift} to show the results on CM lifts of $p$-divisible groups 
 and abelian varieties. Here we make crucial use of Oort's results on central leaves. Finally in \S~\ref{sec:CMK3}, we prove the finiteness result for CM lifts of supersingular $K3$ surfaces, 
 by comparing the deformation theory of $K3$'s with that of $\GSpin$ $p$-divisible groups. 
 
 \section*{Acknowledgements}
 We are very grateful to Bjorn Poonen for sharing his unpublished pre-print where we first saw the question about reductions of Hecke orbits, and for useful conversations. We are extremely grateful to Arul Shankar for several valuable conversations pertaining to CM lifts of abelian varieties. We also thank Ching-Li Chai and Ziquan Yang for valuable discussions and correspondence pertaining to CM lifts and K3 surfaces. 
  
 \section{Finiteness for reductions of Hecke orbits}\label{FinitenessThms} 

\subsection{} Let $K$ be a field equipped with a rank $1$ valuation. Throughout the paper, we will denote by $\cO_K$ the ring 
of integers of $K.$ Fix an algebraic closure $\bar K$ of $K.$  We denote by $G_K = \Gal(\bar K/K)$ the absolute Galois group of $K,$ and by $I_K \subset G_K$ the inertia subgroup. 

In this section we suppose that $K$ is a finite extension of $\Q_p.$ We write  $\bar \F_p$ for the residue field of $\bar K.$ We denote by $\breveQ_p$ the maximal unramified extension of $\Q_p$, and by $\breve{K}$ the compositum of $K$ and $\breveQ_p$.

\subsection{} 
 Let $\pdiv/\cO_K$ denote a $p$-divisible group. 
Denote by $T_p\pdiv$ the $p$-adic Tate module of $\pdiv,$ and let 
$\rho = \rho_{\pdiv}: G_K \rightarrow \GL(T_p\pdiv)$ be the Galois representation 
associated to $\pdiv.$ We denote by $G$ (resp. $H$) the Zariski closure of $\rho(G_K)$ (resp. $\rho(I_K)$) in $\GL(T_p\pdiv[1/p]).$ 
Note that, since $\rho(I_K)$ is normal in $\rho(G_K),$ $H$ is normal in $G.$

\begin{lemma}\label{zariskiclosure}
We have  $[G:H]< \infty$ if and only if $[\rho(G_K):\rho(I_K)] < \infty$.
\end{lemma}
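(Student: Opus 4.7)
The plan is to prove the two directions separately. For the forward implication, suppose $[\rho(G_K):\rho(I_K)] = n < \infty$, and write $\rho(G_K) = \bigsqcup_{i=1}^n g_i\rho(I_K)$. Since Zariski closure commutes with finite unions and with left translation by a fixed element, one obtains $G = \bigcup_{i=1}^n g_i H$, exhibiting $G/H$ as the image of the finite set $\{g_1,\dots,g_n\}$; hence $[G:H] \le n$.

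For the converse, the central input is a theorem of Sen (in the refined form due to Bogomolov) for Hodge--Tate Galois representations: the $p$-adic Lie algebra of the image of such a representation coincides with the Lie algebra of its Zariski closure inside $\GL(V)$, where $V = T_p\pdiv[1/p]$. Since $\rho$ arises from a $p$-divisible group over $\cO_K$ it is crystalline, and in particular Hodge--Tate, so this yields $\Lie(\rho(G_K)) = \Lie(G)$. Applying the same result to $\rho|_{I_K}$, viewed as a representation of $G_{\breve{K}}$ and still Hodge--Tate, gives $\Lie(\rho(I_K)) = \Lie(H)$. Now if $[G:H] < \infty$, then $H$ contains the identity component $G^0$, and since $H^0$ is a closed connected subgroup of finite index in $G^0$, we have $H^0 = G^0$; hence $\Lie(G) = \Lie(H)$, which combined with the previous two identifications gives $\Lie(\rho(I_K)) = \Lie(\rho(G_K))$.

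The final step is to deduce that $\rho(I_K)$ is open in $\rho(G_K)$: as a closed subgroup of a compact $p$-adic Lie group with the same $p$-adic Lie algebra as the ambient group, the inclusion is a local isomorphism at the identity, so $\rho(I_K)$ contains a neighborhood of $e$ and is therefore open. Openness together with compactness of $\rho(G_K)$ forces the index to be finite. The main technical obstacle is verifying Sen's theorem for representations of the inertia group $I_K = G_{\breve{K}}$, since Sen's original formulation concerns the absolute Galois group of a local field with finite residue field; the needed extension to complete discretely valued fields with algebraically closed residue field is standard in $p$-adic Hodge theory, but should be invoked carefully rather than treated as a black box from the $G_K$ case.
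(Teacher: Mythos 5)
Your proof is correct and follows essentially the same route as the paper: the forward direction via Zariski closures of finitely many cosets, and the converse via Sen's theorem (the paper cites Sen and Serre directly for openness of $\rho(I_K)$ in $H(\Q_p)$, which is exactly the Lie-algebra comparison you spell out) followed by the openness-plus-compactness argument. The caveat you raise about applying Sen over $\breve{K}$ is not even needed, since Sen's theorem as cited already concerns the image of inertia for a representation of $G_K$.
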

\begin{proof}
If $\rho(G_K) = \displaystyle{\cup_{g \in S}} g \rho(I_K)$ for some finite set $S$, then $ \cup_{g \in S} g H$ 
is a closed set containing $\rho(G_K)$ and hence equals $G.$ This shows that $[G:H]$ is finite if $[\rho(G_K):\rho(I_K)]$ is. 

For the other direction, suppose that $[G:H]<\infty$. By a Theorem  of Sen (\cite[Theorem 2]{sen}) and Serre (\cite[Theorem 1]{serre1}), $\rho(I_K)$ is open in $H(\Q_p),$ 
and hence in $G(\Q_p),$ as $H(\Q_p)$ has finite index in $G(\Q_p).$ In particular, this implies that $\rho(I_K)$ is open 
in $\rho(G_K) \subset G(\Q_p).$ Since $\rho(G_K)$ is compact, this implies $\rho(G_K)/\rho(I_K)$ is discrete and compact, hence finite.
\end{proof}

\begin{defn}
We say that two $p$-divisible groups over a finite field $\mathbb{F}_q$ are equivalent if they become isomorphic over $\Fpbar$.
\end{defn}

\begin{lemma}\label{oortvasiu}
 For any $h\geq 1,$ the set of equivalence classes of $p$-divisible groups over $\F_q$ of height $h$ is finite. 
 \end{lemma}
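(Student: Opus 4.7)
The plan is to reduce the statement to a finiteness question about truncated Barsotti--Tate groups, and then invoke a theorem of Traverso, as refined and proved in full by Oort and Vasiu, which says that a $p$-divisible group of bounded height is determined up to isomorphism by a suitably high truncation.

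Concretely, by the Traverso--Oort--Vasiu theorem there is an integer $N = N(h)$, depending only on $h$, such that two $p$-divisible groups over $\Fpbar$ of height $h$ are isomorphic if and only if their $p^N$-torsion subgroups are isomorphic as truncated Barsotti--Tate groups of level $N$. Granting this, the assignment $\pdiv \mapsto \pdiv[p^N]_{\Fpbar}$ descends to a well-defined injection
\[
\{\pdiv/\F_q \text{ of height } h\}/\!\sim \;\longhookrightarrow\; \{\text{BT}_N\text{'s of height } h \text{ over } \Fpbar\}/\!\cong,
\]
since if $\pdiv_1[p^N]_{\Fpbar} \cong \pdiv_2[p^N]_{\Fpbar}$ then $\pdiv_{1,\Fpbar} \cong \pdiv_{2,\Fpbar}$ by the theorem. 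The image is contained in the set of those $\Fpbar$-isomorphism classes of BT$_N$'s that descend to $\F_q$, which is a quotient of the set of $\F_q$-isomorphism classes of BT$_N$'s of height $h$. The latter is finite: BT$_N$'s of height $h$ are finite commutative group schemes of order $p^{Nh}$, forming a moduli stack of finite type over $\F_p$, so the set of $\F_q$-points is finite (alternatively, one may appeal directly to Dieudonn\'e theory over $W_N(\F_q)$, which classifies such objects by a finite amount of data in a finite-length module).

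The principal obstacle is the invocation of the Traverso--Oort--Vasiu theorem, a deep structural result whose proof requires a careful analysis of Dieudonn\'e modules or displays; once it is in hand, the rest reduces to the standard finiteness of $\F_q$-points of a finite-type moduli problem. One subtlety worth noting is that the Traverso--Oort--Vasiu bound a priori depends on both $h$ and the dimension $d$, but since $0 \le d \le h$ one may take the maximum over the finitely many admissible $d$'s to get a bound depending on $h$ alone.
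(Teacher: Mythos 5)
Your proposal is correct and follows essentially the same route as the paper: both invoke Oort's theorem (your ``Traverso--Oort--Vasiu'' result, cited in the paper as \cite[Corollary 1.7]{oortfoliation}) that a height-$h$ $p$-divisible group over $\Fpbar$ is determined by its $p^{n(h)}$-torsion, and then conclude by the finiteness of the set of isomorphism classes of finite group schemes of order $p^{n(h)h}$ over $\F_q$. Your additional remarks about descent of the truncation to $\F_q$ and uniformity in the dimension $d$ are fine but not needed beyond what the paper already does.
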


\begin{proof} By a result of Oort \cite[Corollary 1.7]{oortfoliation}, there is an integer $n=n(h)$ such that any two 
$p$-divisible groups over $\Fpbar$ of height $h$, are isomorphic if and only if their $p^n$-torsion subgroups are isomorphic. 
In particular, the equivalence class of a $p$-divisible group $\fH$ over $\F_q$ of height $h$ is determined by its $p^n$-torsion subgroup 
$\fH[p^n].$ Since $\fH[p^n]$ is a finite flat group scheme over $\F_q$ of order $p^{nh},$ there are only finitely many possibilities for 
$\fH[p^n],$ and the lemma follows.
\end{proof}

\subsection{}\label{defn:redheckeorbit} Let $J(\pdiv)$ denote the set of isomorphism classes of $p$-divisible groups over $\cO_{\bar K}$ which are isogenous to $\pdiv\otimes_{\cO_K}\cO_{\bar K}.$ We set 
\[
J(\pdiv, \Fpbar) \colonequals \{\pdiv'\otimes_{\cO_{K'}}\bar \F_p \ | \  \pdiv' \in J(\pdiv)\},
\]
the set of isomorphism classes of reductions of elements of $J(\pdiv).$

Analogously, we denote by $I(A)$ the Hecke orbit of $A$, namely the set isomorphisms of abelian varieties over $\cO_{\bar K}$ which are isogenous to $A\otimes_{\cO_K}\cO_{\bar K}.$ We then define   
\[
I(A, \Fpbar) \colonequals \{A' \otimes_{\cO_{K'}}\bar \F_p\ | \  A' \in I(A)\},
\]
 the reduction of the Hecke orbit of $A$.

We now give a Galois-theoretic criterion for the finiteness of $J(\pdiv,\Fpbar)$
\begin{proposition}\label{propidea} If $[G:H]< \infty,$ then $J(\pdiv, \Fpbar)$ is a finite set. 
\end{proposition}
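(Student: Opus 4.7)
The strategy is to produce a single finite unramified extension $k_m$ of the residue field $k$ of $K$ such that every $\pdiv'\in J(\pdiv)$ descends to some $\cO_{K'}$ whose residue field $k'$ is contained in $k_m$; Lemma~\ref{oortvasiu} applied to $k_m$ then at once forces $J(\pdiv,\Fpbar)$ to be finite.

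By Lemma~\ref{zariskiclosure} the hypothesis yields $m\colonequals [\rho(G_K):\rho(I_K)]<\infty$, and $\rho$ factors through a surjection $G_k=G_K/I_K\twoheadrightarrow \rho(G_K)/\rho(I_K)$ of order $m$ whose kernel is $G_{k_m}$, where $k_m\subset\Fpbar$ is the unique unramified extension of $k$ of degree $m$. Now fix $\pdiv'\in J(\pdiv)$ with a quasi-isogeny $\pdiv_{\cO_{\bar K}}\to\pdiv'$, and identify $T'\colonequals T_p\pdiv'$ with a $\Z_p$-lattice in $V\colonequals V_p\pdiv$. Clearing denominators represents the quasi-isogeny as an honest isogeny whose kernel is a finite flat subgroup of $\pdiv[p^n]_{\cO_{\bar K}}$; continuity of the $G_K$-action on the finite set $\pdiv[p^n]$ ensures this kernel is defined over a finite extension of $K$, so $U\colonequals\mathrm{Stab}_{G_K}(T')$ is open, and $\pdiv'$ descends to $\cO_{K'}$ where $K'\colonequals\bar K^U$.

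The central step is the claim $k'\subset k_m$, which translates to $G_{k_m}\subset G_{k'}=UI_K/I_K$ inside $G_k$, and further to $\rho^{-1}(\rho(I_K))\subset UI_K$ inside $G_K$. Given $g\in\rho^{-1}(\rho(I_K))$, choose $h\in I_K$ with $\rho(g)=\rho(h)$; then $gh^{-1}\in\ker\rho$ acts trivially on $V$, hence fixes every lattice and lies in $U$, whence $g=(gh^{-1})\cdot h\in UI_K$. Consequently $\pdiv'_{\Fpbar}$ is the $\Fpbar$-base change of $\pdiv'_{k'}\otimes_{k'}k_m$, a $p$-divisible group over the fixed finite field $k_m$, and Lemma~\ref{oortvasiu} applied to $k_m$ with height equal to that of $\pdiv$ supplies the required finiteness. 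The only nontrivial observation is the containment $\ker\rho\subset U$; everything else is bookkeeping between Galois subgroups and subfields, so I do not expect any serious obstacle beyond this.
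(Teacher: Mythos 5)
Your argument is correct and is essentially the paper's proof: both rest on the observation that every lattice in $V_p\pdiv$ is stabilized by $\ker\rho$, so each $\pdiv'$ descends to a subfield of the splitting field $K_\rho$, whose residue field extension of $k$ has degree bounded by $[\rho(G_K):\rho(I_K)]$, after which Lemma~\ref{oortvasiu} finishes. The only cosmetic difference is that the paper first replaces $K$ by a finite extension to arrange $\rho(I_K)=\rho(G_K)$ (making $K_\rho/K$ totally ramified), whereas you keep the index $m$ explicit and bound the residue field by $k_m$ directly.
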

\begin{proof} By Lemma \ref{zariskiclosure}, our hypothesis implies that $\rho(I_K)$ has finite index in $\rho(G_K)$. Therefore, after replacing $K$ by a finite extension if necessary, we may assume that $\rho(I_K)=\rho(G_K).$ 
Let $K_{\rho}$ the splitting field of $\rho$, namely the Galois extension defined by the subgroup $\ker(\rho)$.
Since $\rho(I_K)=\rho(G_K),$ $K_{\rho}/K$ is a totally ramified extension.

Note that any isogeny of $p$-divisible groups with source $\pdiv$ can be defined over $K_{\rho}.$ 
Hence if $\pdiv'$ is in $J(\pdiv),$ then $\pdiv'$ has a representative (again denoted $\pdiv'$) which is defined over $K_{\rho}.$ 
Since $K_{\rho}/K$ is totally ramified, the reduction $\pdiv'\otimes_{\cO_{\bar K}} \bar \F_p$ is defined over the residue field of $K,$  namely 
$\F_q.$ The finiteness of $J(\pdiv,\bar \F_p)$ now follows from Proposition \ref{oortvasiu}.\qedhere
\end{proof}

\subsection{} We will apply Proposition \ref{propidea} in two cases. To explain the first of these, recall that 
for a $p$-divisible group $\fH$ over $\F_q,$ its Dieudonn\'e module $\D(\fH)$ is a finite free $W(\F_q)$-module equipped 
with a semi-linear Frobenius $\varphi.$ If $q = p^r,$ then $\varphi^r$ acts linearly on $\D(\fH),$ and we call this action the 
$q$-Frobenius $\Frob_q$ on $\D(\fH).$ Following Rappoport-Zink \cite{rapoportzink}, we say that $\hdiv$ is \emph{decent} if the action of $\Frob_q$ on $\D(\fH)$ is 
semisimple with eigenvalues which are all rational powers of $q.$ That latter condition means that for each eigenvalue 
$\alpha,$ $\alpha^m = q^n$ for some integers $m,n.$

The semisimplicity condition is always satisfied if $\fH = B[p^\infty]$ is the $p$-divisible group arising from an abelian variety $B$ over $\F_q.$ 
Examples of a decent $p$-divisible groups include those of the form $\fH = B[p^\infty]$ with $B$ a supersingular abelian variety 
$B$ over $\F_q.$ 

\begin{proposition}\label{prop:usingdecent} If  $\fH = \pdiv\otimes_{\cO_K} \F_q$ is decent then $[G:H]$ is finite. 

\end{proposition}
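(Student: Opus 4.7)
By Lemma~\ref{zariskiclosure}, it suffices to prove $[\rho(G_K):\rho(I_K)]<\infty$. Since $G_K/I_K\cong\hat{\Z}$ is topologically generated by any lift $F$ of the $q$-Frobenius, this amounts to showing that some power $\rho(F)^m$ lies in $H(\Q_p)$, equivalently, that the image of $\rho(F)$ in $G/H$ has finite order. After replacing $K$ by a finite unramified extension (which does not affect $\rho(I_K)$ nor the finiteness of $[G:H]$), we may assume that the slope decomposition of the isocrystal $N\colonequals\D(\fH)[1/p]$ is defined over $K_0\colonequals W(\F_q)[1/p]$ and that a single integer $m$ satisfies $\alpha^m\in q^{\Z}$ for every eigenvalue $\alpha$ of $\Frob_q$; then $\Frob_q^m=\nu(q^m)$ for the integral slope cocharacter $\nu\colon\Gm\to\GL(N)$.

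The crystalline comparison theorem provides a $G_K$- and $\varphi$-equivariant isomorphism
\[
V_p(\pdiv)\otimes_{\Q_p}B_{\cris}\;\simeq\;N\otimes_{K_0}B_{\cris},
\]
which allows one to track $\rho(F)$ against $\Frob_q$ through the period isomorphism. The conclusion one extracts is that, modulo a twist by the Hodge cocharacter, the action of $\rho(F)$ on $V_p(\pdiv)$ corresponds to the action of $\Frob_q$ on $N$. By Sen's theorem, the Lie algebra of $H$ contains the Sen operator, which for a $p$-divisible group has integer eigenvalues $0$ and $1$; exponentiating gives the Hodge cocharacter as an element of $H_{\bar{\Q}_p}$, so the Hodge--Tate twist is absorbed into $H$ and $\rho(F)^m$ represents the same class as $\Frob_q^m=\nu(q^m)$ in $G/H$.

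It remains to show that the slope cocharacter $\nu$ itself takes values in $H$. Applying Sen's theorem isoclinic piece by isoclinic piece, each isoclinic subquotient of $V_p(\pdiv)$ arises (after base change to $\breve{\Q}_p$) as a Lubin--Tate style Galois representation whose inertia action has Zariski-closure containing the slope cocharacter on that piece; hence $\nu(q^m)\in H(\bar{\Q}_p)$, so $\rho(F)^m\in H$ and $G/H$ is finite. The main technical hurdle is the crystalline comparison step: precisely identifying the element of $G/H$ corresponding to $\rho(F)$ and verifying that the discrepancy with $\Frob_q$ is exactly a Hodge--Tate twist absorbed by $H$. The decency hypothesis is essential both in ensuring $\Frob_q^m$ is a cocharacter value and for the compatibility of characteristic polynomials across the comparison.
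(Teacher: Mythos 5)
Your reduction is sound as far as it goes: since $\rho(I_K)\subset H(\Q_p)$, the image of $\rho(G_K)$ in $(G/H)(\Q_p)$ is topologically generated by the class of a Frobenius lift $\rho(F)$, and if that class has finite order then $G/H$, being the Zariski closure of a finite cyclic group, is finite. The problem is that the two steps you yourself flag as the technical content are asserted rather than proved, and neither is a citable fact. First, the claim that ``modulo a twist by the Hodge cocharacter, the action of $\rho(F)$ on $V_p(\pdiv)$ corresponds to the action of $\Frob_q$ on $N$'' has no precise true formulation at the level of $V_p(\pdiv)$ itself: $\rho(F)$ and $\Frob_q$ act on different spaces, identified only after tensoring with $B_{\cris}$, where the Galois action is entangled with that on the period ring; for a non-ordinary (e.g.\ supersingular) $\pdiv$ --- the main case of interest --- the inertia action does not split $V_p(\pdiv)$ into Hodge twists of unramified pieces, so there is no element of $G(\bar\Q_p)$ that you can exhibit as ``$\rho(F)$ up to a Hodge twist.'' Second, Sen's theorem places the Sen operator, whose eigenvalues are the Hodge--Tate weights $0,1$, in $\Lie H\otimes_{\Q_p}\C_p$; exponentiating gives a conjugate of the \emph{Hodge} cocharacter, not the \emph{slope} cocharacter $\nu$. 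On an isoclinic piece of slope $d/h$ the slope cocharacter acts by the scalar $z^{d/h}$, which has the same trace as the Sen operator but is a different element of $\mathfrak{gl}_h$ unless $d=0$ or $d=h$; your ``Lubin--Tate style'' assertion that $\nu(q^m)\in H(\bar\Q_p)$ is therefore unsupported.

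The paper's proof sidesteps both difficulties by passing to the quotient before comparing Frobenii. Take a faithful representation $W$ of $G/H$, viewed as a representation of $G_K$ via $\rho$; it is unramified (inertia lands in $H$) and crystalline (it lies in the Tannakian category generated by $V_p(\pdiv)^*$), so the comparison between $\rho(F)$ on $W$ and $\Frob_q$ on $D_{\cris}(W)$ is the classical unit-root correspondence $W\cong (D_{\cris}(W)\otimes W(\F_{q'}))^{\varphi=1}$ --- there is no Hodge twist to absorb and no slope cocharacter to locate inside $H$. Decency then enters exactly once, through the Tannakian observation that $D_{\cris}(W)$ lies in the category generated by $\D(\fH)[1/p]$: the eigenvalues of $\Frob_q$ on $D_{\cris}(W)$ are monomials in those on $\D(\fH)[1/p]$, hence rational powers of $q$, and being $p$-adic units they are roots of unity; semisimplicity (the other half of decency) then gives $\Frob_q$ finite order, hence finite image of $G_K/I_K$ in $\GL(W)$ and finiteness of $G/H$. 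If you want to rescue your argument, run the Frobenius comparison on $W$ rather than on $V_p(\pdiv)$; the slope cocharacter then never appears.
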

\begin{proof} Let $\Rep_G$ denote the $\Q_p$-linear Tannakian category of algebraic representations of $G,$ 
and $\Isoc_{\Q_p}$ the Tannakian category  of isocrystals over $K_0,$ the maximal absolutely unramified subfield of $K.$ 
As for $\D(\fH),$ $\Frob_q: = \varphi^r$ acting on $\Isoc_{\Q_p}$ is linear. 
Using Fontaine's functor $D_{\cris},$ the representation $\rho:G_K \rightarrow G(\Q_p),$ gives rise to a functor  
\[
D_{\cris}: \Rep_G \rightarrow \Isoc_{\Q_p} \quad w \mapsto D_{\cris}(w\circ \rho).
\]
This functor sends $V = (T_p\pdiv[1/p])^*$ to $\D[1/p],$ where $\D = \D(\fH).$

Now let $W$ be a faithful representation of $G/H,$ viewed as a representation of $G,$ and write $w: G \rightarrow \GL(W).$ 
Then $\rho\circ w$ is an unramified representation, so that the eigenvalues of $\Frob_q$ acting on $D_{\cris}(W)$ are $p$-adic 
units. Since $W$ is in the Tannakian category generated by $V$, it follows that $D_{\cris}(W)$ is in the Tannakian category generated by 
$\D[1/p].$ Hence $\Frob_q$ acting on $D_{\cris}(W)$ is semi-simple and each of its eigenvalues have the 
form $\prod_{i=1}^n \alpha_i^{d_i},$ where $\alpha_1, \cdots , \alpha_n$ are the eigenvalues of $\Frob_q$ on $D_{\cris}(V).$ 
In particular any eigenvalue $\alpha$ of $\Frob_q$ acting on $D_{\cris}(W)$ is a rational power of $q.$ 
Since $\alpha$ is also a $p$-adic unit, it follows that $\alpha$ is a root of unity. 

It follows that $\Frob_q$ acting on $D_{\cris}(W)$ has finite order. This implies that for some power $q'$ of $q,$ $W,$ viewed as a representation of $G_K/I_K,$ can be identified with $(D_{\cris}(W)\otimes W(\F_{q'}))^{\varphi=1},$ with $G_K/I_K$ acting 
via its action on $W(\F_{q'}).$ In particular, we see that $G_K/I_K$ acts on $W$ through a finite quotient. Since $\rho(G_K)$ 
is dense in $G,$ it is dense in the image of $G$ in $\GL(W),$ and hence this image is finite. As $W$ was a faithful representation 
of $G/H,$ this proves the proposition.
\end{proof}

\begin{corollary}\label{unramtofinite} If $\fH = \pdiv\otimes_{\cO_K} \F_q$ is decent, then $J(\pdiv,\Fpbar)$ is a finite set. 
If moroever $\pdiv = A[p^\infty]$ for an abelian scheme $A$ over $\cO_K$ with supersingular reduction, then $I(\pdiv, \Fpbar)$ is a finite set. 
\end{corollary}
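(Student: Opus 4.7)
The first assertion is immediate from the two preceding propositions. If $\fH = \pdiv \otimes_{\cO_K} \F_q$ is decent, Proposition~\ref{prop:usingdecent} gives $[G:H] < \infty$, which is precisely the hypothesis of Proposition~\ref{propidea}, and the latter yields finiteness of $J(\pdiv, \Fpbar)$.

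For the second assertion, my plan is to reduce to the $p$-divisible group case and then to invoke Oort's central leaves. The supersingular hypothesis on $A$ means that $A[p^\infty] \otimes_{\cO_K} \F_q$ is decent (as noted in the text preceding Proposition~\ref{prop:usingdecent}), so by the first assertion $J(A[p^\infty], \Fpbar)$ is finite. Consider the natural reduction map
\[
I(A, \Fpbar) \to J(A[p^\infty], \Fpbar), \qquad \bar B \mapsto \bar B[p^\infty],
\]
whose image lies in a finite set. It therefore suffices to bound each fiber.

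Fix $\overline{\pdiv}_0 \in J(A[p^\infty], \Fpbar)$. The fiber above $\overline{\pdiv}_0$ consists of supersingular abelian varieties $\bar B$ in the isogeny class of $\bar A$ over $\Fpbar$ whose $p$-divisible group is isomorphic to $\overline{\pdiv}_0$; any two such $\bar B, \bar B'$ differ by a prime-to-$p$ isogeny. This fiber is (a subset of) a central leaf in the sense of Oort. By Oort's dimension formula \cite{oortdimension}, central leaves in the supersingular Newton stratum of the Siegel moduli space are $0$-dimensional, so they have only finitely many $\Fpbar$-points. Since Oort's setup is phrased for principally polarized abelian varieties, I would first apply Zarhin's trick to replace $A$ by $(A \times A^\vee)^4$, which is principally polarizable and still has supersingular reduction, and then descend the finiteness to $A$ using that the reduction-of-Hecke-orbit construction is functorial in $A$ and that $A$ is recoverable from $(A \times A^\vee)^4$ up to isogeny.

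The main obstacle is precisely this last step of fiber-bounding. Over $\Fpbar$, the full isogeny class of a supersingular abelian variety of dimension $\geq 2$ is typically infinite, so the passage from finiteness at the level of $p$-divisible groups to finiteness at the level of abelian varieties is not formal. The content of Oort's $0$-dimensionality theorem is that the supersingular case is special: fixing the $p$-divisible group up to isomorphism pins down the abelian variety to a $0$-dimensional locus, reducing the ambiguity to a finite set exactly when we need it.
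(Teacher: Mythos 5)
The first assertion is handled exactly as in the paper: decency gives $[G:H]<\infty$ by Proposition \ref{prop:usingdecent}, and Proposition \ref{propidea} concludes. For the second assertion you take a genuinely different route from the paper, and it has a gap at the very step you flag as the main obstacle. The paper never invokes central leaves here; it argues entirely on the Galois side, using supersingularity twice. After a finite extension of $K$, the Galois action on the prime-to-$p$ torsion of a supersingular $A$ is by scalars, so every prime-to-$p$ isogeny is already defined over $K$; combined with the totally ramified splitting field $K_{\rho}$ from the proof of Proposition \ref{propidea} (applicable because $A[p^\infty]\otimes_{\cO_K}\F_q$ is decent), every member of $I(A)$ is defined over a totally ramified extension, hence every reduction is defined over the \emph{fixed} finite field $\F_q$. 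Finiteness then follows from Zarhin's trick and the finiteness of abelian varieties of bounded dimension over a fixed finite field. Your route instead fibers $I(A,\Fpbar)$ over the finite set $J(A[p^\infty],\Fpbar)$ and tries to bound each fiber geometrically via the $0$-dimensionality of supersingular central leaves.

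The gap is in the polarized-to-unpolarized descent. Central leaves live in the moduli of \emph{principally polarized} abelian varieties, while the fibers you must bound are sets of \emph{unpolarized} abelian varieties. Your proposed fix --- pass to $(B\times B^\vee)^4$ by Zarhin and then recover $B$ ``up to isogeny'' --- does not suffice: recovering $B$ only up to isogeny is exactly what fails to give finiteness, since the isogeny class of a supersingular abelian variety over $\Fpbar$ is infinite for $g\geq 2$ (as the paper notes via Moret-Bailly). What you actually need is that only finitely many $B$ up to \emph{isomorphism} can produce a given $(B\times B^\vee)^4$; this is true because an abelian variety has only finitely many direct factors up to isomorphism (a Jordan--Zassenhaus type finiteness), but that is a nontrivial input you neither state nor cite. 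Two smaller issues: the locus with fixed unpolarized $p$-divisible group is a \emph{finite union} of central leaves (Oort's Theorem 3.3, exactly as used in the proof of Theorem \ref{thm:cmlift}), not a single leaf, since a leaf is cut out by the polarized $p$-divisible group; and the assertion that two members of a fiber differ by a prime-to-$p$ isogeny is unjustified (and not needed). With these repairs your strategy does go through, but the paper's Galois-theoretic argument is shorter and avoids this machinery entirely.
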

\begin{proof} The first statement follows immediately Proposition \ref{prop:usingdecent} and Proposition \ref{propidea}.

For the second statement, as the special fiber of $A$ is supersingular, we may replace $K$ by a finite extension so that the Galois action on the prime-to-$p$ torsion of $A$ is through scalars (with Frobenius mapping to the scalar $q^{1/2}$ where $q$ is the size of the residue field of $K$). Further, we observe that $A[p^{\infty}]\otimes_{\cO_K} \F_q$ is decent as $A$ has supersingular reduction, and hence the hypothesis of Proposition \ref{propidea} holds. Let $K_{\rho}$ be as in the proof of Proposition \ref{propidea}. We have that every isogeny from $A$ is defined over $K_{\rho}$ and hence all $A'$ isogenous to $A$ have reductions defined over $\F_q$, the residue field of $K$.  By Zarhin's trick \cite[Theorem 4.1]{zarhin} there are only finitely many isomorphism classes of abelian varieties over $\mathbb{F}_q$, and hence $I(A,\Fpbar)$ is a finite set.
\end{proof}

\subsection{} Our second application of Proposition \ref{propidea} is more indirect, and proceeds by showing that even if 
$\pdiv$ does not satisfy the hypothesis of Proposition \ref{propidea} one can sometimes construct an auxiliary $p$-divisible group which 
does.

\begin{lemma}\label{lemma:centraltwist} Suppose that the connected component of the identity in $G$ is reductive. 
Then after replacing $K$ by a finite extension,  there exists a $p$-divisible group $\pdiv'/\cO_K$ 
such that 
\begin{enumerate}
\item $\pdiv$ and $\pdiv'$ are isomorphic over $\cO_{\breve {K}}$
\item If $G'$ (resp.~$H'$) denotes the Zariski closure of the image of $G_K$ (resp.~$I_K$) in $\GL(T_p\pdiv'[1/p]),$ 
then $G' = H'.$
\end{enumerate}
\end{lemma}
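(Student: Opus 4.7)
The plan is to twist $\pdiv$ by an unramified character $\chi$ valued in the connected center of $G$, producing a $p$-divisible group $\pdiv'$ whose Galois representation $\chi^{-1}\rho$ has trivial image in the quotient $G/H$ on the Frobenius element. Since $\chi$ is unramified, the inertia action is unchanged, so $\pdiv \cong \pdiv'$ over $\cO_{\breve K}$; and killing the Frobenius part of $G/H$ forces $G' = H'$ for the twist.

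I would begin by reducing to a clean form. After replacing $K$ by a finite extension, we may assume $G = G^0$ is connected. Since $\rho(G_K)/\rho(I_K) \subset (G/H)(\Q_p)$ is topologically cyclic (generated by the image of Frobenius) and Zariski dense, $G/H$ is commutative. As $G$ is reductive by hypothesis, $[G,G] \subseteq H$, and so $G/H$ is a quotient of the torus $G/[G,G]$, hence of multiplicative type. After a further finite extension of $K$, we may assume $T \defeq G/H$ is a connected torus.

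Next, let $Z \defeq Z(G)^0$ be the connected center of $G$, a torus, and note that the composition $\pi: Z \hookrightarrow G \twoheadrightarrow T$ is a surjection of $\Q_p$-tori. The composition $\rho_T: G_K \xrightarrow{\rho} G(\Q_p) \to T(\Q_p)$ factors through $G_K/I_K$ and is therefore determined by $\rho_T(\Frob) \in T(\Q_p)$. The obstruction to lifting $\rho_T(\Frob)$ along $Z(\Q_p) \to T(\Q_p)$ lies in the finite group $H^1(G_{\Q_p}, \ker \pi)$; after passing to a finite unramified extension of $K$ (replacing $\Frob$ by a power), we obtain a lift $z \in Z(\Q_p)$, and we arrange further that $z$ lies in the compact open subgroup $Z(\Q_p) \cap \GL(T_p\pdiv)$. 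Let $\chi: G_K \to Z(\Q_p)$ be the unramified character with $\chi(\Frob) = z$.

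Finally, define $\rho' \defeq \chi^{-1}\rho: G_K \to G(\Q_p) \subset \GL(T_p\pdiv[1/p])(\Q_p)$. This is a group homomorphism since $\chi$ is central-valued, and it preserves the lattice $T_p\pdiv$ by our choice of $\chi$. Since $\chi$ is unramified (crystalline with Hodge-Tate weight $0$), $\rho'$ is crystalline with Hodge-Tate weights in $\{0,1\}$, so by the theorem of Breuil-Kisin classifying $p$-divisible groups over $\cO_K$ via their Tate modules, $\rho'$ is the Galois representation of a $p$-divisible group $\pdiv'/\cO_K$. Property (1) follows from $\rho'|_{I_K} = \rho|_{I_K}$ together with the full faithfulness of the Tate module functor over $\cO_{\breve K}$. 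For property (2), by construction $\pi \circ \chi = \rho_T = \pi \circ \rho$, so $\pi \circ \rho' = (\pi \circ \chi^{-1}) \cdot (\pi \circ \rho) = \rho_T^{-1}\rho_T = 1$, giving $\rho'(G_K) \subseteq H(\Q_p)$ and hence $G' \subseteq H$; meanwhile $\rho'(I_K) = \rho(I_K)$ has Zariski closure $H$, so $H' = H$, and thus $G' = H' = H$.

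The main obstacle will be the construction of $\chi$: I need to resolve a finite Galois-cohomological lifting obstruction along $Z(\Q_p) \to T(\Q_p)$ and simultaneously ensure the lift lies in a compact open subgroup preserving the lattice $T_p\pdiv$. Both can be arranged by passing to suitable finite extensions of $K$; once $\chi$ is in hand, the realization of $\rho'$ as a $p$-divisible group $\pdiv'$ is standard $p$-adic Hodge theory.
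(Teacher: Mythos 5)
Your strategy is the same as the paper's: twist $\rho$ by an unramified character valued in the centre of $G$ so as to kill the image of Frobenius in $T=G/H$, leaving inertia (hence the reduction) untouched; the preliminary reductions ($T$ abelian because $\rho(G_K)/\rho(I_K)$ is topologically cyclic and Zariski dense, hence $T$ a torus) and the final computation ($\pi_T\circ\rho'=1$ gives $G'\subset H=H'$) also match. The two differences are in the execution. For the construction of $\pdiv'$ from $\chi^{-1}\rho$ you invoke the Breuil--Kisin classification of lattices in crystalline representations with Hodge--Tate weights in $\{0,1\}$, whereas the paper notes that $\psi(\gamma)^{-1}$ is central and lattice-preserving, hence an automorphism of $\pdiv$ by Tate's theorem, and twists the canonical descent datum on $\pdiv[p^n]|_{\cO_{\breve K}}$; this elementary \'etale descent produces the isomorphism in (1) by construction, rather than via full faithfulness of the Tate module functor. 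Your route is valid (for all $p$, using the extensions of Kisin's theorem to $p=2$) but heavier.

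The one point that needs repair is the construction of $\chi$. The paper does not lift along the full surjection $Z_G\to T$: it first chooses a subtorus $T_{\mathrm{sub}}\subset Z_G$ mapping onto $T$ by an \emph{isogeny} and lifts along $T_{\mathrm{sub}}\to T$. This is not cosmetic. Since an isogeny has finite kernel, any lift of the bounded element $\tau$ automatically generates a bounded subgroup, which is what makes $\psi$ a well-defined unramified character and allows the finite-index argument placing $\tau'$ in the stabilizer of $T_p\pdiv$. In your version $\ker(Z\to T)$ may be a positive-dimensional torus, and an arbitrary lift $z$ of $\tau$ need not be bounded (already for $\G_m^2\twoheadrightarrow\G_m$ a lift of a unit can have a factor of $p$ in the kernel direction). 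Your proposed mechanism for fixing this --- ``passing to suitable finite extensions of $K$,'' i.e.\ replacing $z$ by a power --- does not work here: powers of an unbounded torus element stay unbounded. You must instead \emph{choose} the lift well, either by the paper's subtorus device or by an argument with maximal bounded subgroups of tori over local fields showing that, after replacing $\tau$ by a power, it lifts to the bounded part of $Z(\Q_p)$. Once a bounded lift is secured, the rest of your argument goes through.
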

\begin{proof} After replacing $K$ by a finite extension, we may assume that $G$ is reductive, and we set $T = G/H.$ 
As $T$ is abelian, it is a torus. Let $Z_G$ denote the center of $G,$ and $G^{\der}$ its derived subgroup. 
The map $Z_G \times G^{\der} \rightarrow G$ is surjective with finite kernel, so we obtain a surjective map $Z_G \rightarrow T.$ 
This implies that there is a subtorus $T_{\mathrm{sub}} \subset Z_G,$ such that the map $T_{\mathrm{sub}} \rightarrow T$ is an isogeny.

Let $\chi: G_K \rightarrow T(\Q_p)$ be the map induced by $\rho,$ and let $\sigma \in G_K$ be a lift of the $q$-Frobenius. 
For some positive integer $m,$ $\tau = \chi(\sigma^m)$ lifts to an element of $T_{\mathrm{sub}}(\Q_p).$ Thus after replacing $K$ by a finite extension, we may assume that $\tau$ lifts to an element $\tau' \in T_{\mathrm{sub}}(\Q_p).$ Since $T_{\mathrm{sub}} \rightarrow T$ is an isogeny, 
the subgroup generated by $\tau'$ is bounded, so there is an  unramified Galois character of the form
$$
\psi: G_K\rightarrow G_K/I_K \rightarrow T_{\mathrm{sub}}(\Q_p); \quad \sigma \mapsto \tau'
$$
Now let $C \subset Z_G(\Q_p)$ denote the subgroup preserving $T_p\pdiv.$ 
After replacing $K$ by a finite extension, and so $\tau'$ by a power, we may assume that $\tau' \in C.$ 

The action of $C$ on $T_p\pdiv$ commutes with the action of $G_K.$ Hence by Tate's theorem, it induces 
a map $C \rightarrow \Aut \pdiv.$ Since $\pdiv$ is defined over $\cO_K,$ for any $\gamma \in G_K/I_K,$ we have a canonical isomorphism
$\gamma^* \pdiv \simeq \pdiv$ over $\cO_{\breve{K}}.$ Denote by $c_{\gamma}$ the composite of this isomorphism 
and the automorphism $\psi(\gamma)^{-1} \in C$ viewed as an automorphism of $\pdiv$ (here we view $\psi$ as a character on 
$G_K/I_K$). Then $c_{\gamma}$ defines a descent datum on $\pdiv[p^n]|_{\cO_{\breve{K}}},$ for each $n.$ By \'etale descent, 
$c_{\gamma}$ arises from a unique $p$-divisible group $\pdiv'$ over $\cO_K.$ 

By construction, the underlying $\Z_p$-modules of $T_p\pdiv$ and $T_p\pdiv'$ are canonically identified, and the action of $G_K$ on $\pdiv'$ is obtained by multiplying its action on $T_p\pdiv$ by $\psi(\sigma)^{-1}.$ Hence we have 
$ H = H' \subset G' \subset G,$ and the map $G' \rightarrow T$ is trivial. It follows that $G' = H'.$
\end{proof}

\begin{corollary}\label{newtred} If the $p$-adic Galois representation associated to $\pdiv$ is semisimple, then the set $J(\pdiv,\Fpbar)$ is finite. 
\end{corollary}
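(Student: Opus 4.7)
The plan is to apply Lemma~\ref{lemma:centraltwist} to produce an auxiliary $p$-divisible group $\pdiv'/\cO_K$ for which the hypothesis of Proposition~\ref{propidea} holds, and then transfer the resulting finiteness back to $\pdiv$.

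First I would check that semisimplicity of $\rho$ implies that the identity component $G^0$ is reductive, so that Lemma~\ref{lemma:centraltwist} applies. Set $V = T_p\pdiv[1/p]$; since $\rho(G_K)$ is Zariski dense in $G$, semisimplicity of $V$ as a $G_K$-representation transfers to semisimplicity as a $G$-representation. The unipotent radical $U$ of $G^0$ is a normal unipotent subgroup of $G$, and on each simple $G$-subrepresentation $W$ of $V$ the subspace $W^U$ is a nonzero (unipotent groups always have fixed vectors) $G$-stable subspace, hence all of $W$. Semisimplicity then forces $U$ to act trivially on $V$, and faithfulness of $G \hookrightarrow \GL(V)$ yields $U = 1$.

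With $G^0$ reductive, Lemma~\ref{lemma:centraltwist} produces, after replacing $K$ by a finite extension, a $p$-divisible group $\pdiv'/\cO_K$ which becomes isomorphic to $\pdiv$ over $\cO_{\breve K}$ and whose associated Zariski closures satisfy $G' = H'$. In particular $[G':H'] = 1$ is finite, so Proposition~\ref{propidea} applied to $\pdiv'$ shows that $J(\pdiv', \Fpbar)$ is finite. The finite base extension of $K$ is harmless here, because the definition of $J(\pdiv, \Fpbar)$ depends only on $\bar K$ and $\Fpbar$, both of which are unchanged.

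To conclude, the isomorphism $\pdiv_{\cO_{\breve K}} \cong \pdiv'_{\cO_{\breve K}}$ extends via $\cO_{\breve K} \subset \cO_{\bar K}$ to an isomorphism over $\cO_{\bar K}$, so any $p$-divisible group over $\cO_{\bar K}$ is isogenous to $\pdiv$ if and only if it is isogenous to $\pdiv'$; hence $J(\pdiv) = J(\pdiv')$ and a fortiori $J(\pdiv, \Fpbar) = J(\pdiv', \Fpbar)$ is finite. The main conceptual step is the reductivity check that lets us invoke Lemma~\ref{lemma:centraltwist}, but this is a standard representation-theoretic argument; everything else is a formal concatenation of the lemmas already proved.
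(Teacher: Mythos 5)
Your proof is correct and follows essentially the same route as the paper: invoke Lemma~\ref{lemma:centraltwist} to produce $\pdiv'$ with $G'=H'$, apply Proposition~\ref{propidea} to get finiteness of $J(\pdiv',\Fpbar)$, and transfer back via the isomorphism over $\cO_{\breve K}$. The one thing you do that the paper leaves implicit is the verification that semisimplicity of $\rho$ forces $G^0$ to be reductive (via triviality of the unipotent radical on a faithful semisimple representation); that check is genuinely needed to match the hypothesis of Lemma~\ref{lemma:centraltwist}, and your argument for it is correct.
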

\begin{proof} By Lemma \ref{lemma:centraltwist} we can find $\pdiv'$ such that $H'=G'$. By Proposition \ref{propidea} $J(\pdiv', \Fpbar)$ is a finite set.  On the other hand the two $p$-divisible groups $\pdiv$ and $\pdiv'$ are isomorphic over $\breve K$ and therefore 
$J(\pdiv, \Fpbar)=J(\pdiv', \Fpbar)$.
\end{proof}

\section{Finiteness of $p$-divisible groups admitting a CM lift}\label{section:cmlift}

\subsection{} \label{S:padicCMtype} 
In this section we assume that $K$ is a finite extension of $K_0 = W(\bar \F_p)[1/p].$ 

A $p$-divisible group $\pdiv$ of (constant) height $h,$ over any base, is said to have CM by a commutative semisimple $\mathbb{Q}_p$-algebra $F$ if there is an injective homomorphism
\[
F\hookrightarrow\End(\pdiv)\otimes \mathbb{Q},
\]
such that $\dim_{\Q_p} F=\height(\pdiv)$.
We say that $\pdiv$ is CM, or has CM if $\pdiv$ has CM by some $F$ as above.

If $\pdiv$ is a $p$-divisible group over $\cO_K,$ we can form its formal group $\widehat \pdiv.$ 
If $\pdiv$ has CM by $F,$ then $\Lie \widehat \pdiv \otimes_{\cO_K} \bar K \simeq \oplus_{\sigma} V_{\sigma}$ 
where $\sigma$ runs over $\Q_p$-algebra maps $F \rightarrow \bar K,$ and for $a \in F,$ we have $aV_{\sigma} = \sigma(a)V_{\sigma}.$
 For each $\sigma,$ the summand $V_{\sigma}$ is either trivial, or one dimensional over $\bar K$ \cite[Lemma 3.7.1.3]{conradcmlifting}. 
We denote by $\Phi$ the set of $\sigma$ for which $V_{\sigma}$ is one dimensional, and we call $\Phi$ the CM type of $\pdiv.$

\begin{lemma}\label{lem:descenttolocal} Let $\pdiv$ be a $p$-divisible group over $\cO_K$ with CM by $F.$ Then there exists 
a finite extension $K'/\Q_p$ contained in $K,$ and a $p$-divisible group $\pdiv'$ over $\cO_{K'}$ with CM by $F$ such that  
$\pdiv'\otimes_{\cO_{K'}}\cO_K$ is $F$-linearly isomorphic to $\pdiv.$
\end{lemma}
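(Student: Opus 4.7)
The plan is to show that $\pdiv$ is obtained from a canonical CM $p$-divisible group defined over a finite extension of $\Q_p$ by twisting by a finite-order character, and then to descend that twist.

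First, I choose a finite extension $K_1/\Q_p$ inside $K$ containing the Galois closure over $\Q_p$ of each simple factor of $F$, so that the CM type $\Phi$ consists of embeddings factoring through $K_1$. Applying Krasner's lemma to an Eisenstein polynomial defining the totally ramified extension $K/K_0$, I may further arrange $K_1 \cdot K_0 = K$, so that $G_K$ is identified with the inertia subgroup $I_{K_1}$ of $G_{K_1}$.

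By the classical existence theory for CM $p$-divisible groups with prescribed CM type (Shimura--Taniyama--Tate, cf.\ \cite[\S3.7]{conradcmlifting}), after possibly enlarging $K_1$ again, there is a $p$-divisible group $\pdiv_{\mathrm{can}}/\cO_{K_1}$ with CM by $F$ of type $\Phi$; write $\chi_{\mathrm{can}}:G_{K_1}\to\cO_F^\times$ for its Galois character on the Tate module, and $\chi:G_K\to\cO_F^\times$ for that of $\pdiv$. Both $\chi$ and $\chi_{\mathrm{can}}|_{G_K}$ are crystalline with Hodge--Tate weights determined by $\Phi$, so the ratio $\eta\defeq\chi\cdot(\chi_{\mathrm{can}}|_{G_K})^{-1}:G_K\to\cO_F^\times$ has all Hodge--Tate weights zero. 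By a theorem of Sen, $\eta$ is then potentially unramified; since $K$ has algebraically closed residue field $\Fpbar$, this forces $\eta$ to have finite image.

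To descend $\eta$, I note that conjugation by a Frobenius lift $\sigma\in G_{K_1}$ permutes the finite set of characters $I_{K_1}\to\cO_F^\times$ of order dividing that of $\eta$, so $\sigma^n$ fixes $\eta$ for some $n\geq 1$. Letting $K'/K_1$ be the unramified extension of degree $n$ (which lies in $K$, since $\Q_p^{\mathrm{ur}}\subset K_0\subset K$), $\eta$ extends uniquely to a finite-order character $\tilde\eta:G_{K'}\to\cO_F^\times$. Using the embedding $\cO_F^\times\hookrightarrow\Aut(\pdiv_{\mathrm{can}})$, I twist $\pdiv_{\mathrm{can}}\otimes_{\cO_{K_1}}\cO_{K'}$ by $\tilde\eta$ via \'etale descent over a finite Galois extension of $K'$ killing $\tilde\eta$, producing a $p$-divisible group $\pdiv'/\cO_{K'}$ with CM by $F$ of type $\Phi$ whose Galois character is $\chi_{\mathrm{can}}|_{G_{K'}}\cdot\tilde\eta$. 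This restricts to $\chi$ on $G_K$, so by Tate's full faithfulness theorem for $p$-divisible groups over $\cO_K$, we obtain $\pdiv'\otimes_{\cO_{K'}}\cO_K\simeq\pdiv$ $F$-linearly. The main obstacle is the appeal to the existence of $\pdiv_{\mathrm{can}}$ with prescribed CM data over a finite extension of $\Q_p$; the Frobenius-orbit descent of $\eta$ and the twisting construction are standard.
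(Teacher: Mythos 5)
Your strategy---compare $\pdiv$ with a canonical CM $p$-divisible group $\pdiv_{\mathrm{can}}$ over a finite extension of $\Q_p$ and descend the discrepancy character---is genuinely different from the paper's, which works directly with the weakly admissible module $D_{\cris}(T_p\pdiv[1/p])$, descends the filtration and the Frobenius $\delta\sigma$ to a finite extension $K'$ (using that every unit of $F\otimes_{\Q_p}K_0$ is a $\sigma$-conjugacy coboundary because the residue field is algebraically closed), and then invokes the equivalence between such modules and $p$-divisible groups. Your route can be made to work, but as written it has a genuine gap at the descent of $\eta$. You assert that the set of continuous characters $I_{K_1}\to\cO_F^\times$ of order dividing that of $\eta$ is finite, so that Frobenius conjugation has a finite orbit on $\eta$. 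This is false when $p$ divides the order of $\eta$: the wild inertia subgroup of $G_{K_1}$ is a free pro-$p$ group of infinite rank, so already $\Hom_{\mathrm{cont}}(I_{K_1},\Z/p)$ is infinite, and nothing in your argument forces $\sigma^n\eta\sigma^{-n}=\eta$ for any $n$. Sen's theorem only gives that $\eta$ has finite image, which is too weak to run the orbit argument. The correct observation is stronger and makes the descent of $\eta$ unnecessary: $\eta$ is a ratio of crystalline characters of $G_K$ with the same Hodge--Tate cocharacter, hence is crystalline with all Hodge--Tate weights zero, hence unramified; since the residue field of $K$ is $\Fpbar$ we have $G_K=I_K$, so $\eta$ is \emph{trivial} and $\chi=\chi_{\mathrm{can}}|_{G_K}$ on the nose.

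There is a second, smaller gap in the endgame. Equality of characters identifies the rational Tate modules $V_p\pdiv'$ and $V_p\pdiv$ as $F[G_K]$-modules, but gives no isomorphism of the integral lattices $T_p\pdiv'$ and $T_p\pdiv$; Tate's full faithfulness converts a $G_K$-equivariant isomorphism of \emph{lattices} into an isomorphism of $p$-divisible groups, whereas at this stage you only have a quasi-isogeny. The paper's proof handles exactly this point: it produces an isogeny $\pdiv'_{\cO_K}\to\pdiv$ and then enlarges $K'$ so that the kernel of that isogeny is defined over $\cO_{K'}$, replacing $\pdiv'$ by the quotient. You need the analogous step. Finally, the input you flag as the main obstacle---the existence of $\pdiv_{\mathrm{can}}$ with prescribed $(F,\Phi)$ over a finite extension of $\Q_p$---is indeed available from \cite[\S 3.7]{conradcmlifting}, and your Krasner-lemma reduction identifying $G_K$ with $I_{K_1}$ is fine; those parts are not where the trouble lies.
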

\begin{proof} This is well known. Let $D$ be the weakly admissible module $D_{\cris}(T_p\pdiv[1/p]),$ so that $D$ is an  
$F\otimes_{\Q_p}K_0$-module equipped with an injective, semi-linear Frobenius and a one step filtration 
$\Fil^1 D_K \subset D_K = D\otimes_{K_0}K$ by an 
$F\otimes_{\Q_p}K_0$-submodule. 

The filtration on $D_K$ is induced by a cocharacter $\mu \in X_*(\Res_{F/\Q_p} \G_m).$ 
Choose $K'/\Q_p$ finite and contained in $K,$ such that $\mu$ is defined over $K'$, and let $K'_0$ denote the maximal unramified subfield of $K'$. Then there exists a free $F\otimes_{\Q_p}K'_0$-module $D',$ an $F\otimes_{\Q_p}K'$-submodule $\Fil^1 D'_{K'} \subset D'_{K'} = D'\otimes K',$ and an $F$-linear isomorphism 
$\iota:D'\otimes_{K_0'}K_0 \simeq D,$ respecting filtrations. 

Now identify $D'$ with $F\otimes_{\Q_p}K'_0.$ Then we can identify $D$ with  $F\otimes_{\Q_p}K_0$ via $\iota,$ and the Frobenius on $D$ given by $\delta\sigma,$ where $\delta \in (F\otimes_{\Q_p}K_0)^\times$ and $\sigma$ denotes the Frobenius on $K_0.$ After possibly replacing $K'$ by a larger field, there exists $\delta' \in (F\otimes_{\Q_p}K'_0)^\times$ such that 
$\delta'\delta^{-1}$ is unit. We equip $D'$ with the Frobenius $\delta'\sigma.$ 

As $\delta'\delta^{-1}$ is a unit there exists $c \in (F\otimes_{\Q_p}K_0)^\times$ such that $\delta' = c^{-1}\delta\sigma(c).$ 
Then $c\cdot \iota$ respects Frobenius, and also respects filtrations as $\iota$ does. $D'$ along with the Frobenius $\delta'\sigma$ and filtration $\Fil^1 D'_{K'} \subset D'_{K'}$ is a weakly admissible module as $D$ is. This weakly admissible module equals $ D_{\cris}(T_p \pdiv'[1/p])$ where $\pdiv'/\cO_{K'}$ is a $p$-divisible group, for example by \cite{kisinfcrystal}. The isomorphism $D'_{K_0}\simeq D_{K_0}$ induces a quasi-isogeny between the Tate-modules of $\pdiv'$ and $\pdiv$ as $\Gal(\bar{K}/K)$-representations, and hence (after multiplication by a power of $p$) an isogeny $\pdiv'_{\cO_K} \rightarrow \pdiv$. After replacing $K'$ by a finite extension, we may assume that the kernel of this isogeny is defined over $\cO_{K'}$ and the theorem follows. 
\end{proof}

\subsection{} Let $\hdiv$ be a $p$-divisible group over $\bar \F_p.$ We say that $\hdiv$ admits a CM lift, if there 
exists a finite extension $K/W(\bar \F_p)[1/p],$ and a CM $p$-divisible group $\pdiv$ over $\cO_K,$ such that 
$\pdiv\otimes_{\cO_K} \bar \F_p$ is isomorphic to $\hdiv.$ 

We remark that there is essentially no extra generality gained by considering CM lifts to more general base rings. 
More precisely, if $R$ is an integral, normal, flat $W(\bar \F_p)$-algebra, and $\pdiv$ is a CM deformation of $\hdiv$ to 
$R,$ then there is a finite extension $K/W(\bar \F_p)[1/p],$ and an inclusion $\cO_K \rightarrow R,$ such that $\pdiv$ 
arises from a CM deformation of $\hdiv$ over $\cO_K.$ This can be deduced from the fact that the rigid analytic period 
morphism in \cite[\S 5]{rapoportzink} is \'etale, together with the fact that any $F\otimes_{W(\bar \F_p)} R$-direct summand 
of the free, rank $1$ $F\otimes_{W(\bar \F_p)} R$-module $\D(\pdiv)(R)$ is defined over $F\otimes_{W(\bar \F_p)} \cO_K,$ 
for some $\cO_K \subset R,$ as above.

\begin{theorem}\label{thm:cmliftpdiv}
  Let $\hdiv/\bar \F_p$ be a $p$-divisible group. Then, the isogeny class of $\hdiv$ contains only finitely many isomorphism 
  classes of $p$-divisible groups which admit a CM lift. 
\end{theorem}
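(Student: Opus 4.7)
The plan is to reduce to Corollary \ref{newtred} by establishing two points: (a) any CM $p$-divisible group has semisimple Galois representation, so each individual $J(\pdiv', \Fpbar)$ is finite by Corollary \ref{newtred}; and (b) there are only finitely many isogeny classes over $\cO_{\bar K}$ of CM $p$-divisible groups whose reduction is isogenous to $\hdiv$. Together these will imply that the set of $\hdiv'$ in the theorem lies in a finite union of finite sets.

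For (a), if $\pdiv'$ is CM by a commutative semisimple $\Q_p$-algebra $F$, then the Galois action on $T_p\pdiv'[1/p]$ commutes with the $F$-action, so it factors through the torus $\Res_{F/\Q_p}\Gm \subseteq \GL(T_p\pdiv'[1/p])$. In particular the Zariski closure of the image of $\rho_{\pdiv'}$ is contained in a torus, so the representation is semisimple in the sense required. Lemma \ref{lem:descenttolocal} lets us realize any CM lift over some $\cO_{K'}$ with $K'$ a finite extension of $\Q_p$, so Corollary \ref{newtred} applies and $J(\pdiv', \Fpbar)$ is finite.

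For (b), up to $\cO_{\bar K}$-isogeny a CM $p$-divisible group is determined by the pair $(F, \Phi)$ consisting of its CM algebra $F$ (viewed as an embedded subalgebra of the rational endomorphism algebra of the Dieudonn\'e module) and its CM type $\Phi$. For the reduction to be isogenous to $\hdiv$, the algebra $F$ must embed as a commutative semisimple subalgebra of dimension $h = \height(\hdiv)$ into $A := \End(\hdiv)\otimes\Q$, which is a finite-dimensional semisimple $\Q_p$-algebra by the Dieudonn\'e--Manin classification. Such embeddings correspond to $\Q_p$-rational maximal tori in the reductive algebraic group $A^\times$, of which there are only finitely many conjugacy classes (a classical fact over a $p$-adic field); and for each such $F$ there are finitely many CM types $\Phi$, these being subsets of the finite set $\Hom_{\Q_p}(F, \bar\Q_p)$. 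Two pairs $(F, \Phi)$ give $\cO_{\bar K}$-isogenous lifts if and only if they are conjugate by an element of $A^\times$, so up to conjugacy only finitely many pairs arise.

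Choosing representatives $\pdiv'_1, \ldots, \pdiv'_N$ from the finitely many isogeny classes in (b), every $\hdiv'$ as in the theorem admits a CM lift belonging to one of these classes, and hence $\hdiv' \in J(\pdiv'_i, \Fpbar)$ for some $i$; the theorem then follows from the finiteness of each $J(\pdiv'_i, \Fpbar)$. The main obstacle lies in step (b): rigorously establishing that the pair $(F, \Phi)$, up to $A^\times$-conjugacy, truly parameterizes $\cO_{\bar K}$-isogeny classes of CM lifts with a prescribed reduction-isogeny class. This will require Fontaine's equivalence between weakly admissible filtered modules and $p$-divisible groups up to isogeny, applied to the rank-one $F\otimes_{\Q_p}K_0$-module $D_\cris(T_p\pdiv'[1/p])$, together with the finiteness of $\Q_p$-rational conjugacy classes of maximal tori in a reductive $\Q_p$-group.
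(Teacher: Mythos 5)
Your proposal is correct and follows essentially the same route as the paper: finitely many pairs $(F,\Phi)$, descent to a finite extension of $\Q_p$ via Lemma \ref{lem:descenttolocal}, and Corollary \ref{newtred} applied using the fact that the Zariski closure of the Galois image lies in the torus $\Res_{F/\Q_p}\G_m$. The step you flag as the main obstacle --- that two CM lifts with the same $(F,\Phi)$ are $F$-linearly isogenous --- is exactly what the paper cites from Conrad--Chai--Oort (Proposition 3.7.4 of \cite{conradcmlifting}), so your detour through conjugacy classes of maximal tori in $\End(\hdiv)\otimes\Q$ is unnecessary: fixing $(F,\Phi)$ up to abstract isomorphism already suffices.
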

\begin{proof}
Since the algebra $\End(\hdiv)\otimes \Q$ has finite dimension over $\Q_p$, there are only finitely many choices for the CM algebra $F.$ 
Given $F,$ there are only finitely many choices for the CM type $\Phi.$ Thus, we may fix $F$ and $\Phi,$ and consider only 
$p$-divisible groups in the isogeny class of $\hdiv,$ which admit a CM lift having CM by $F$ and CM type $\Phi.$

Let $\pdiv, \pdiv_1$ be such lifts, defined over some finite extension $K/W(\bar \F_p)[1/p].$ By Lemma \ref{lem:descenttolocal}, there exists a $p$-divisible group $\pdiv'$ with CM by $F,$ defined over a finite extension $K'/\Q_p$ such that 
$\pdiv'\otimes_{\cO_{K'}}\cO_{K} \simeq \pdiv.$ Since $\pdiv_1$ and $\pdiv$ have the same CM type, there is an $F$-linear 
isogeny $\pdiv \rightarrow \pdiv_1,$ by \cite[Proposition 3.7.4]{conradcmlifting}. Thus $\pdiv_1 \in J(\pdiv').$ 

Now the Zariski closure of the image of $G_{K'}$ acting on $T_p\pdiv'$ is a closed subgroup of the torus $\Res_{F/\Q_p} \G_m,$ 
hence is reductive. Hence $J(\pdiv', \bar \F_p)$ is finite by Theorem \ref{newtred}, and the theorem follows.
\end{proof}

\subsection{} Let $\cA_g$ denote the moduli space of principally polarized abelian varieties of dimension $g$. 
For a given Newton polygon $\nu$ we denote by $W_{\nu} \subset \cA_{g,\bar \F_p}$ the corresponding Newton stratum. 
It is a locally closed subscheme. 

For any $x \in \cA_g(\bar \F_p),$ the associated $p$-divisible group $\pdiv_x$ carries a principal polarization, 
an isomorphism, $\psi_x,$ of $\pdiv_x$ and its Cartier dual. The polarized central leaf through a point $x\in \mathcal{A}_g$ is the locus of points where the associated polarized $p$-divisible group is geometrically \emph{isomorphic} to the polarized $p$-divisible group parameterized by $x.$  Oort \cite[Theorem~5.3]{oortfoliation} has shown that this locus is a closed subvariety of the Newton stratum through $x.$

\begin{theorem}\label{thm:cmlift}
Let $S$ be the set of points in $\mathcal{W}_{\nu}$ which admit CM lifts. Then $S$ is contained in a finite number of central leaves. 
\end{theorem}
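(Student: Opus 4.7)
The strategy is to reduce Theorem~\ref{thm:cmlift} to the $p$-divisible group statement Theorem~\ref{thm:cmliftpdiv}, together with a refinement that tracks the polarization.

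First, I would use Dieudonn\'e--Manin to observe that all unpolarized $p$-divisible groups $\pdiv_x = A_x[p^\infty]$ for $x \in W_\nu$ are isogenous over $\bar{\F}_p$, since they share the Newton polygon $\nu$. Every $x \in S$ admits a CM lift $A/\cO_K$, so the unpolarized $\pdiv_x$ also admits a CM lift. Theorem~\ref{thm:cmliftpdiv}, applied to this single isogeny class, therefore shows that $\{[\pdiv_x] : x \in S\}$ is a finite set of unpolarized isomorphism classes.

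Second, for each unpolarized class $\pdiv_0$ appearing this way, I would show that only finitely many principal polarizations $\psi$ on $\pdiv_0$ (up to $\Aut(\pdiv_0)$-equivalence) arise from CM-liftable $x$. This amounts to a \emph{polarized} version of Theorem~\ref{thm:cmliftpdiv}, and I would prove it by rerunning that proof in the polarized setting. The CM data of a polarized lift $(A,\lambda)$ consists of a CM algebra $E$ of rank $2g$ equipped with the positive involution $*$ induced by the Rosati involution of $\lambda$ and a CM type $\Phi$; there are only finitely many such triples. For each fixed $(E,*,\Phi)$, the polarized analogue of Lemma~\ref{lem:descenttolocal} descends $(A[p^\infty],\lambda)$ to a finite extension $K'/\Q_p$, and the Zariski closure of the image of $G_{K'}$ on $T_pA[1/p]$ is contained in the intersection $\Res_{E/\Q_p}\Gm \cap \GSp$, which is still a torus and in particular reductive. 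Applying Corollary~\ref{newtred} in this polarized setting then shows that the polarized reduction of the polarized Hecke orbit is finite, so only finitely many polarized isomorphism classes $(\pdiv_0,\psi)$ can arise as reductions of CM-liftable polarized $p$-divisible groups.

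Combining the two steps, the set $\{[(\pdiv_x,\psi_x)] : x \in S\}$ is finite, which by the definition of polarized central leaves means that $S$ lies in a finite union of central leaves. The main subtlety, and the step requiring the most care, is checking that the Galois-theoretic machinery of Section~\ref{FinitenessThms}---Lemmas~\ref{zariskiclosure},~\ref{lem:descenttolocal}, and~\ref{lemma:centraltwist}, Proposition~\ref{propidea}, and Corollary~\ref{newtred}---transports without loss to the polarized context, i.e.\ with $\GL$ replaced by $\GSp$. Because in the CM case the monodromy is already contained in a torus, this amounts to formal bookkeeping rather than any new idea, but the polarization must be carried through the descent and central-twist constructions consistently.
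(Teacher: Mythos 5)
Your first step coincides with the paper's: by Dieudonn\'e--Manin all $p$-divisible groups on $\mathcal{W}_{\nu}$ lie in a single geometric isogeny class, so Theorem~\ref{thm:cmliftpdiv} shows that only finitely many \emph{unpolarized} isomorphism classes $[\pdiv_x]$ occur for $x \in S$. Where you diverge is in passing from unpolarized classes to central leaves. The paper does \emph{not} develop any polarized analogue of the Hecke-orbit machinery; instead it invokes Oort's results \cite[Theorems~2.2 and~3.3]{oortfoliation} (after passing to $\cA_g[n]$ so as to have a universal family of $p$-divisible groups): the locus of $y$ with $\hdiv_y$ isomorphic to a fixed $\hdiv_x$ as an \emph{unpolarized} $p$-divisible group is a closed subvariety of the Newton stratum which is a finite union of central leaves. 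This geometric input entirely replaces your second step and is precisely why the unpolarized Theorem~\ref{thm:cmliftpdiv} suffices.

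Your second step, as written, has a genuine gap: the claim that the Section~\ref{FinitenessThms} machinery ``transports without loss'' to the polarized setting is not mere bookkeeping. Two concrete problems. First, the central twist of Lemma~\ref{lemma:centraltwist} multiplies the Galois action by a character $\psi$ valued in $Z_G$; unless $\psi$ lands in the kernel of the similitude character, the pairing on $T_p\pdiv'$ becomes a polarization valued in an unramified twist of $\Z_p(1)$, so the twisted object is not a principally polarized $p$-divisible group over $\cO_K$ in the usual sense, and one must either argue separately about its reduction or show the twist can be chosen in $Z_G\cap\ker(\nu)$ while still killing $G/H$. Second, to place all CM-liftable $(\pdiv_x,\psi_x)$ with fixed $(E,*,\Phi)$ into a single \emph{polarized} Hecke orbit you need an $E$-linear isogeny carrying one polarization to a scalar multiple of the other; the cited \cite[Proposition 3.7.4]{conradcmlifting} only gives an $E$-linear isogeny, and the two pullback polarizations may differ by a totally positive element of the totally real subalgebra that is not a rational multiple of a norm, so one must additionally bound the resulting classes (finiteness of the local norm index at $p$ saves you, but that is an argument, not bookkeeping). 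Neither issue is fatal, but both must be addressed --- or, more efficiently, avoided altogether via Oort's theorem as in the paper.
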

\begin{proof} 
Let $n$ denote some large enough integer, such that $\mathcal{A}_g[n]$, the moduli space of principally polarized $g$-dimensional abelian varieties with full symplectic level $n$ structure, is a fine moduli space. The notion of a polarized central leaf in $\mathcal{A}_g[n]$ through any point is defined exactly as in the case of $\mathcal{A}_g$, and without any reference to level structure. Then, the result for $\mathcal{A}_g$ follows directly from the result for $\mathcal{A}_g[n]$.

Let $x \in \mathcal{W}_{\nu}(\Fpbar)\subset \cA_g[n](\bar \F_p),$ and let $\hdiv_x$ and $\cA_x$ be the associated principally polarized $p$-divisible group and  
abelian scheme, respectively. By \cite[Theorem 2.2]{oortfoliation}, the set of points $y\in \mathcal{A}_g[n]$ with $\hdiv_y$ isomorphic to $\hdiv_x$ (as unpolarized $p$-divisible groups) is a closed subvariety of $\mathcal{W}_{\nu}$. By \cite[Theorem 3.3]{oortfoliation}, this closed subvariety is a union of finitely many central leaves\footnote{Both of Oort's results are stated in the setting of families of $p$-divisible groups, which is the only reason we need to work with $\cA_g[n]$ instead of $\cA_g$.} in $\mathcal{W}_{\nu}$. The result now follows immediately from Theorem \ref{thm:cmliftpdiv}.
\end{proof}

\begin{corollary}\label{cor:sscmlift} There are only finitely many supersingular principally polarized abelian varieties of dimension $g,$ 
which admit a CM lift.
\end{corollary}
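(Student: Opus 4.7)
The plan is to deduce this directly from Theorem \ref{thm:cmlift} applied to the supersingular Newton stratum, combined with Oort's dimension computation for central leaves. By Theorem \ref{thm:cmlift}, the set of supersingular principally polarized abelian varieties of dimension $g$ admitting a CM lift is contained in a finite union of central leaves inside the supersingular Newton stratum $W_{\mathrm{ss}} \subset \cA_g$. It therefore suffices to show that each such central leaf contains only finitely many points.

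For this I would invoke Oort's result \cite[Section 5]{oortdimension} (already recalled in the introduction) that central leaves in the supersingular Newton stratum are zero-dimensional. Since a central leaf is a closed subvariety of $W_{\mathrm{ss}}$, which is of finite type over $\bar{\F}_p$, any zero-dimensional closed subvariety is a finite set of points. Combining these two facts yields the desired finiteness.

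There is essentially no obstacle here, since all the substantive work has been done: Theorem \ref{thm:cmlift} provides the containment in finitely many central leaves, and Oort's dimension formula reduces finiteness of each leaf to the trivial fact that a zero-dimensional subvariety of a finite type scheme is finite. The only minor bookkeeping point is that the theorem references a polarized notion of central leaf, whose dimension is bounded by the unpolarized one, so zero-dimensionality is inherited; this is immediate from Oort's work and requires no additional argument.
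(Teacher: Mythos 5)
Your proof is correct and is essentially the paper's own argument: the paper also deduces the corollary from Theorem \ref{thm:cmlift} together with Oort's result that central leaves in the supersingular stratum are zero-dimensional, hence finite. Your extra remark about the polarized versus unpolarized notions of central leaf is harmless bookkeeping and does not change the argument.
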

\begin{proof} This follows from Theorem \ref{thm:cmlift}, and the fact that central leaves in the supersingular stratum are zero-dimensional  (\cite[Section~5]{oortdimension}).
\end{proof}

\section{CM lifts of supersingular $K3$-surfaces}\label{sec:CMK3}
In this section we deduce Theorem~\ref{C:introK3thm} for CM lifts of $K3$-surfaces when $p\geq 5$. The weaker statement  for polarized $K3$-surfaces is an immediate consequence of Theorem~\ref{thm:cmlift} and the theory of integral models of Shimura varieties \cite{keerthi}. Here we prove the stronger result for $K3$-surfaces with no reference to  polarizations.  The main inputs will be the Kuga-Satake construction for $K3$-crystals (without any reference to polarizations) due to Yang, and the crystalline Torelli theorem, due to Ogus.

\subsection{}\label{subsection:backgroundK3} 
Let $R$ be a $p$-adically complete and separated, $p$-torsion free ring. 
Suppose that $R$ is equipped with a lift of Frobenius $\sigma.$ 
For $i \geq 0,$ we denote by $R(i),$ the $R$-module $R,$ equipped with a Frobenius 
$\varphi = p^i\cdot\sigma,$ and a filtration given by where $\Fil^2 R(2) = R(2),$ and  $\Fil^3 R(2) = 0.$

A $K3$-crystal over $R$ ([cf. \cite[Definition 3.1]{ogusk3}) is a free $R$-module $\bL$ of rank 22, endowed with a Frobenius-linear endomormophism $\varphi: \bL \rightarrow \bL$ and a $\varphi$-compatible,  
perfect, symmetric bilinear  form $\bilinear: \bL \times \bL \rightarrow R(2)$ satisfying: 
\begin{enumerate}
    \item[(a)] $p^2 \bL \subset \varphi(\bL)$. 
    \item[(b)] The image of $\varphi \otimes_R R/pR$ is projective of rank 1. 
        \end{enumerate}
When $R = W(k)$ for a perfect field $k,$ we also call this a $K3$-crystal over $k.$

A {\it filtered} $K3$-crystal over $R$ is a $K3$-crystal $\bL$ over $R,$ equipped with a 
filtration 
$0=\Fil^3 \subset \Fil^2 \subset \Fil^1 \subset \Fil^0 = \bL$ such that $\text{\rm gr}^{\bullet} \bL$ is a projective $R$-module 
and the following conditions hold 
\begin{enumerate}
    \item[(c)] $\bilinear: \bL\otimes\bL \rightarrow R(2)$ is strictly compatible with filtrations.
    \item [(d)] $\Fil^1 \bL \otimes _RR/pR$ is the kernel of $\varphi$ on $\bL\otimes_RR/R.$
    \end{enumerate}

Note that these conditions imply that $\Fil^1 = {\Fil^2}^{\perp},$ that $\varphi(\Fil^2 \bL) \subset p^2 \bL,$ and that for $i=0,1,2,$ $\text{\rm gr}^i \bL$ has rank $1,20,1$ respectively. 

\subsection{}\label{subsection:K3fields} Now suppose that $R = W(k)$ with $k$ a perfect field, which will be either $\bar \F_p$ or a finite field in application.

A $K3$-crystal $\bL$ over $k$ is said to be \emph{supersingular}, if the slopes of $\varphi$ are all 1.
If $\bL$ is supersingular, and $k$ is algebraically closed, then $\bL^{\varphi = p}$ is a free $\Z_p$-module of rank $22$ 
(\cite[Theorem 3.3]{ogusk3}), which also admits a bilinear form (which will no longer be perfect). 

If $k = \F_q$ is a finite field, then we say a $K3$-crystal $\bL$ over $k$ is {\em decent} if the $q$-Frobenius  on $\bL$ has  eigenvalues  which are rational powers of $q$. We note that this implies the $\Z_p$-module $\bL^{\varphi = p} \subset \bL$ has rank 22. 
Note that every $K3$-crystal over $\Fpbar$ admits a decent model over $\F_q$ for some $q,$ see \cite[\S 4.3]{kottwitz}

\begin{lemma}\label{parabolic} Let $\bL$ be a filtered $K3$-crystal over $R,$ as above. 
Then the filtration on $\bL$ is induced by a cocharacter $\mu: \mathbb G_m \rightarrow  \GSO(\bL, \bilinear).$ 
In particular, the subgroup $P \subset \GSO(\bL, \bilinear)$ preserving the filtration is parabolic.
\end{lemma}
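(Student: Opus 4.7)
The plan is to construct an explicit direct sum decomposition $\bL = L_0 \oplus L_1 \oplus L_2$ into free $R$-submodules of ranks $1, 20, 1$ with $L_2 = \Fil^2$ and $L_1 \oplus L_2 = \Fil^1$, such that $\bilinear$ pairs $L_0$ perfectly with $L_2$, restricts to a perfect form on $L_1$, and vanishes on all other $L_i \otimes L_j$. The cocharacter $\mu: \Gm \to \GL(\bL)$ scaling $L_i$ by $t^i$ then rescales the form by $t^2$, so factors through $\GSO(\bL, \bilinear)$, and its associated descending filtration $\bigoplus_{j \geq i} L_j$ is precisely $\Fil^\bullet$.

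First, $\Fil^2$ is isotropic since $\Fil^2 \subset \Fil^1 = (\Fil^2)^\perp$, so I would fix a generator $e \in \Fil^2$ and set $L_2 := \Fil^2$. Strict compatibility of the form with the filtration identifies the induced map $(\bL/\Fil^1) \otimes \Fil^2 \to R$ as a perfect pairing of rank $1$ free $R$-modules, so any lift $f_0 \in \bL$ of a generator of $\bL/\Fil^1$ satisfies $\langle f_0, e \rangle \in R^\times$. Since $p \geq 5$, $2 \in R^\times$, so I can replace $f_0$ by the isotropic vector
\[
f := f_0 - \frac{\langle f_0, f_0 \rangle}{2 \langle f_0, e \rangle}\, e,
\]
which still projects to a generator of $\bL/\Fil^1$ and satisfies $\langle f, e \rangle \in R^\times$. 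I then set $L_0 := Rf$ and $L_1 := L_0^\perp \cap \Fil^1$.

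Reducing modulo the maximal ideal of $R = W(k)$, the images $\bar e$ and $\bar f$ span a non-degenerate hyperbolic plane in $\bL \otimes k$ whose orthogonal complement has dimension $20$, giving an orthogonal decomposition $\bL \otimes k = \bar L_0 \oplus \bar L_1 \oplus \bar L_2$. By Nakayama's lemma the natural map $L_0 \oplus L_1 \oplus L_2 \to \bL$ is surjective, and between free $R$-modules of rank $22$ it is therefore an isomorphism; in particular $L_1$ is free of rank $20$, $L_1 \oplus \Fil^2 = \Fil^1$, and the block structure of $\bilinear$ shows that the only non-zero pairings between the $L_i$ match total weight $2$ and are perfect. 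Hence $\mu$ is defined, has similitude character $t \mapsto t^2$, and induces the given filtration.

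The final assertion is then standard: for any reductive group scheme $G$ and cocharacter $\mu$, the stabilizer of the filtration associated to $\mu$ via any faithful representation is the parabolic subgroup $P_\mu \subset G$. The only mildly delicate step in the whole argument is producing the isotropic complement $L_0$, which is where the hypothesis $p \geq 5$ enters, via $2 \in R^\times$.
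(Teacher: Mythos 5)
Your proof is correct in substance and follows essentially the same route as the paper's: both arguments split the filtration by an orthogonally adapted grading $\bL = L_0\oplus L_1\oplus L_2$ with $L_2=\Fil^2$ and $L_1\oplus L_2=\Fil^1$, and let $\mu(t)$ act by $t^i$ on $L_i$ so that the similitude character is $t\mapsto t^2$; the paper's ``modifying $\bL^0$ so that it is isotropic'' is the same completing-the-square trick you write out, so the implicit inversion of $2$ is common to both (and needs only that $p$ is odd, since any $p$-adically complete ring is a $\Z_p$-algebra --- it is not really where $p\geq 5$ enters the section). The one genuine discrepancy is that you have silently specialized to $R$ local: you write $R=W(k)$, reduce modulo ``the'' maximal ideal, and speak of generators of $\Fil^2$ and of $\bL/\Fil^1$, whereas the lemma is stated for an arbitrary $R$ as in \S 4.1, for which the graded pieces are only projective and Nakayama is unavailable. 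The paper's proof instead first splits $\Fil^1\bL=\bL^2\oplus\bL^1$ using projectivity of $\mathrm{gr}^1\bL$, and then finds $\bL^0$ as a rank-one direct summand of the rank-two module $(\bL^1)^{\perp}$, which surjects onto $\mathrm{gr}^0\bL$ by perfection and strictness of the pairing. Since the lemma is only ever applied with $R=W(k)$ or $R=\Hat{R}_X$, both local, your argument suffices for the paper's purposes, but to prove the lemma in the stated generality you should replace the reduction-mod-$\fm$ step by this projectivity argument.
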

\begin{proof} Let $\bL^2 = \Fil^2 \bL,$ and choose a submodule $\bL^1 \subset \Fil^1$ so that $\Fil^1 \bL = \bL^2 \oplus \bL^1.$ 
Then $(\bL^1)^\perp$ is free of rank $2$ and surjects on $\text{\rm gr}^0 \bL,$ as $\bilinear$ is perfect and strict for filtrations. 
Thus, we can choose a rank $1$ direct summand $\bL^0 \subset (\bL^1)^{\perp}$ which maps isomorphically to $\text{\rm gr}^0 \bL.$ 
Then $\bilinear$ induces a perfect pairing between the rank $1$ subspaces $\bL^0, \bL^2 \subset (\bL^1)^{\perp}.$ 
Thus, modifying $\bL^0 \subset (\bL^1)^{\perp}$ we may assume that $\bL^0$ is isotropic for $\bilinear.$

Now $\bL = \bL^2 \oplus \bL^1 \oplus \bL^0,$ and we define $\mu$ by requiring that $\mu(z)$ by $z^i$ on $\bL^i.$ 
Since $\bL^i$ and $\bL^j$ are orthogonal for $i+j \neq 2,$ $\mu(z)$ acts on $\bilinear$ by $z^2.$ 
In particular $\mu$ factors through $\GSO(\bL, \bilinear).$
\end{proof}

\subsection{}\label{subsec:K3setup} Now let $k$ be a perfect field of characteristic $p,$ which is either algebraically closed or an algebraic extension of $\F_p.$ Let $X/k$ denote a $K3$-surface. It is well known that the deformation functor of $X$ is smooth and pro-representable and formally smooth of dimension 20 over $W = W(k)$. Let $\Spf \Hat{R}_X$ denote the universal deformation space of $X$, and let $\X \xrightarrow{\pi} \Spf \Hat{R}$ denote the universal deformation of $X$. Choose a set $\{p,x_1 \hdots x_{20}\}$ of  elements that generate the maximal ideal of $\Hat{R}_X$; define $ \sigma$ to be the lift of the Frobenius endomorphism of $\Hat{R}_X \mod p$ such that $\sigma$ is the usual Frobenius on $W(\F_q)$, and $\sigma(x_i) = (x_i)^p.$

Then $\bL_u = R^2f_*\pi$ is a filtered $K3$-crystal over $\Hat{R}_X.$ To give a more explicit description of this filtered $K3$-crystal, we need the following 

\begin{lemma}\label{lem:descentZp} Let $\bL$ be the $K3$-crystal attached to $X.$ There exists a free $\Z_p$-module with quadratic form $(T,\bilinear')$ and an isomorphism $\iota: (T,\bilinear')\otimes_{\Z_p} W\rightarrow (\bL, \bilinear)$.
\end{lemma}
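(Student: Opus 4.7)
The plan is to invoke the classification of unimodular symmetric bilinear forms over Henselian local rings with $2$ invertible. Since $p \geq 5$, both $\Z_p$ and $W$ fall under this setup, and on free modules of a fixed rank such forms are classified up to isomorphism by their discriminant class in the unit group modulo squares.

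First I would record that $(\bL, \bilinear)$ is a unimodular quadratic $W$-module of rank $22$: freeness and rank are part of the definition of a $K3$-crystal (\S\ref{subsection:backgroundK3}), and perfectness of $\bilinear$ is built in as well. Consequently its isomorphism class is pinned down by the discriminant class $\disc(\bilinear) \in W^\times/(W^\times)^2$.

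The bulk of the work is to verify that $\disc(\bilinear)$ lies in the image of $\Z_p^\times/(\Z_p^\times)^2 \to W^\times/(W^\times)^2$, and in fact equals the image of the discriminant of the standard $K3$ lattice $\Lambda_{K3} = U^{\oplus 3} \oplus E_8(-1)^{\oplus 2}$. For this I would use that the universal deformation space $\Spf \hat R_X$ is formally smooth of relative dimension $20$, so $X$ admits a formal lift $\mathfrak X \to \Spf W$. The canonical identification $(\bL, \bilinear) \cong (H^2_{\dR}(\mathfrak X/W), \text{cup})$ reduces the question to a de Rham computation. Using an ample class on $X$ (and $p \geq 5$) to algebraize $\mathfrak X$ to a projective $K3$ scheme $\tilde X$ over $\cO_K$ for some finite $K/W[1/p]$, one compares with Betti cohomology of the complex $K3$ surface $\tilde X_\C$, whose $H^2$ with cup product is $\Lambda_{K3}$; this identifies $\disc(\bilinear)$ with the claimed class.

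Once the discriminant is identified, taking $T \colonequals \Lambda_{K3} \otimes_\Z \Z_p$ with its intersection pairing $\bilinear'$ produces a $\Z_p$-form, and the classification yields an isomorphism $\iota \colon (T, \bilinear') \otimes_{\Z_p} W \xrightarrow{\sim} (\bL, \bilinear)$.

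The main obstacle is the middle step: rigorously transporting the integral structure from characteristic zero to the crystalline $H^2$. Unimodularity from Poincar\'e duality is easy, but pinning down the discriminant class requires the formal lift and a careful de Rham/Betti comparison, which is where the standing hypothesis $p \geq 5$ is used to ensure algebraization and integrality are well-behaved.
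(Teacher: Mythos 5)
Your reduction to a discriminant computation is sound and is essentially the paper's own first step: $(\bL,\bilinear)$ is unimodular of rank $22$, and for $p$ odd such forms over $\Z_p$ and over $W$ are classified by rank together with the discriminant class, so the lemma amounts to showing that $\disc(\bilinear)$ lies in the image of $\Z_p^\times/(\Z_p^\times)^2 \to W^\times/(W^\times)^2$. (The paper notes that after at most a quadratic extension of $k$ one can force $(\bL,\bilinear)$ to have square discriminant and a rank-$11$ isotropic subspace, which already suffices for its applications; and when $k=\bar\F_p$ every unit of $W$ is a square, so the classification alone finishes the proof. The only real content is the case of finite $k$.)

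The gap is in your middle step. The Berthelot--Ogus identification of $(\bL,\bilinear)$ with $(H^2_{\mathrm{dR}}(\tilde X/W),\cup)$ for a lift $\tilde X$ is fine, but the de Rham/Betti comparison with $H^2(\tilde X_\C,\Z)\cong \Lambda_{K3}$ cannot identify the discriminant class: that comparison is an isomorphism only after tensoring with $\C$, the period matrix is transcendental, and every element of $\C^\times$ is a square, so the isometry over $\C$ yields $\disc(\bilinear)=\lambda^2\disc(\Lambda_{K3})$ with $\lambda\in\C^\times$ --- which is vacuous modulo squares in $W^\times$. (The de Rham and Betti discriminants of even-degree cohomology genuinely differ by period factors, and controlling the discrepancy is a theorem, not a formality.) What is actually needed is an \emph{integral $p$-adic} comparison of quadratic lattices between $H^2_{\cris}(X/W)$ and $H^2_{\et}(\tilde X_{\bar K},\Z_p)$ (the latter being $\Lambda_{K3}\otimes\Z_p$ by Artin comparison), and this is precisely the hard input the paper supplies: it forms the prismatic cohomology of $\tilde X$ over the prism $\gS=W\lps u\rps$, obtaining $(\gM,\bilinear_{\gM})$ which specializes to $(\bL,\bilinear)$ at $u=0$ and becomes isomorphic to $(H^2(\tilde X_{\bar K},\Z_p),\bilinear)$ after a faithfully flat base change, and then invokes the Key Lemma of \cite[Prop.~1.3.4]{kisinintegral} to conclude that the two specializations are isomorphic over $W$. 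You correctly flag this transport of integral structure as ``the main obstacle,'' but the complex-analytic comparison you propose in its place cannot close it.
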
 
\begin{proof} The bilinear form on $\bL$ is self dual. It follows from the theory of non-degenerate bilinear forms over finite fields that after replacing $k$ by at most a quadratic extension, $(\bL,\bilinear)$ has determinant a square and also admits a rank-11 isotropic subspace. Indeed, there is a unique quadratic form on $\bL$ (up to isomorphism) that satisfies these conditions, whence it follows that $(T,\bilinear')_{\Z_p}\otimes W$ is isomorphic to $(\bL,\bilinear)$ where $T$ is a rank-22 free $\Z_p$-module, and $\bilinear'$ is the unique self-dual quadratic form on $T$ that has square determinant and admits a rank-11 isotropic subspace. This weaker statement (namely, after replacing $k$ with a quadratic extension) will actually be enough for our applications, so we only sketch the proof of the full statement of the lemma.

Choose a lift $\tilde X$ of $X$ to a smooth formal scheme over $W.$ We will apply the theory of prismatic cohomology 
\cite{BhattScholze} to $\tilde X,$ using the map $\gS = W\lps u \rps \overset{u \mapsto 0}  \rightarrow W,$ where $\gS$ is equipped with 
the Frobenius which sends $u$ to $u^p;$ this is an example of a prism. We obtain a finite free $\gS$-module $\gM,$ 
equipped with a semi-linear Frobenius, and a bilinear pairing $\bilinear_{\gM}$ such that 
\begin{enumerate}
\item $(\gM, \bilinear_{\gM})\otimes_{\gS}W \simeq (\bL, \bilinear).$ 
\item There is a faithfully flat $\gS_{(p)}$ algebra $A$ and an isomorphism 
$$  (\gM, \bilinear_{\gM})\otimes_{\gS} A \simeq (H^2(\tilde X_{\bar K}, \Z_p), \bilinear)\otimes_{\Z_p}A $$
where $\bar K$ is an algebraic closure of $W[1/p],$ and $H^2(\tilde X_{\bar K}, \Z_p)$ is equipped 
with a perfect symmetric bilinear form, using Poincare duality.
\end{enumerate}

Then by the  Key Lemma of \cite[Prop.~1.3.4]{kisinintegral}, one finds that there exists an isomorphism 
$$ (\bL, \bilinear) \simeq (H^2(\tilde X_{\bar K}, \Z_p), \bilinear)\otimes W.$$
\end{proof}

\subsection{}\label{subsec:defmsetup}  Let $\gn \in \Spec \Hat{R}_X$ be the kernel of $\Hat{R}_X \rightarrow W$ sending the $x_i$ 
to $0,$ and let $\bL_0 = \bL_u\otimes_{\Hat{R}_X} W$ be the corresponding filtered $K3$-crystal over $W.$ 
The underlying $K3$-crystal of $\bL_0$ is canonically identified with $\bL.$

Fix an isomorphism as in Lemma \ref{lem:descentZp}. This allows us to regard 
$\GSO = \GSO(\bL, \bilinear),$ as a group over $\Z_p$. 
Choose an isomorphism 
$j:(\bL_u, \bilinear) \simeq (\bL, \bilinear)\otimes \Hat{R}_X$ inducing the identity mod $\gn.$ 
This gives rise to an isomorphism 
$\GSO(\bL_u, \bilinear) \simeq \GSO(\bL, \bilinear)\otimes \Hat{R}_X,$ and 
the Frobenius maps on $\bL_u$ and $\bL_0$ then are given by $b_u\sigma$ and $b\sigma$ for elements $b_u \in \GSO(\Hat{R}_X[1/p])$ 
and $b \in \GSO(W[1/p])$ such that $b_u$ specializes to $b.$

\begin{proposition}\label{prop:strdef} The isomorphism $(\bL_u, \bilinear) \simeq (\bL, \bilinear)\otimes \Hat{R}_X$ can be chosen so that 
\begin{enumerate}
\item $j$ respects filtrations. 
\item  $b_u = u\cdot b$ where $u \in U^{\opp}(\Hat{R}_X),$ and $U^{\opp}$ is the opposite unipotent of the parabolic 
$P \subset \GSO(\bL_u, \bilinear)$ corresponding to the filtration on $\bL_u.$
\end{enumerate}
Moreover, if these conditions are satisfied, the tautological map $\Spf \Hat{R}_X \xrightarrow{u} \Hat{U}^{\opp}$ is an isomorphism. 
\end{proposition}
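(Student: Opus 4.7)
The plan is to establish the three assertions in turn, following the structure of Faltings--Kisin style deformation theory for $F$-crystals with additional structure, and then to derive the isomorphism from a tangent space comparison.

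First I would achieve (1). The Hodge filtration on $\bL_u$, transported via any initial choice of $j$, gives a section $s \colon \Spf \hat R_X \to \GSO/P$ of the flag variety parameterizing filtrations of the type occurring in a $K3$-crystal (here we use Lemma \ref{parabolic} to know $P$ is the parabolic stabilizing the filtration and in particular to know $\GSO/P$ is smooth). The section $s$ reduces mod $\gn$ to the identity coset, because $j$ was chosen to reduce to the identity and $\bL_0$ is identified with $\bL$ as filtered modules. Since $\GSO \to \GSO/P$ is smooth and $\Spf \hat R_X$ is formally smooth over $W$, we may lift $s$ to a map $g \colon \Spf \hat R_X \to \GSO$ reducing to the identity mod $\gn$; replacing $j$ by $g^{-1} \circ j$ then gives an isomorphism that respects filtrations while still reducing to the identity.

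Next, for (2), having fixed the filtration we are still free to modify $j$ by any $p \in P(\hat R_X)$ reducing to the identity (these are exactly the automorphisms of $\bL \otimes \hat R_X$ preserving the filtration and reducing to the identity). Such a modification changes the element $b_u$ to $p \, b_u \, \sigma(p)^{-1}$. Writing $c = b_u b^{-1}$ so that $c$ reduces to the identity, the task becomes solving
\[
p \cdot c \cdot \mathrm{Ad}_b(\sigma(p)^{-1}) \in U^{\opp}(\hat R_X).
\]
Using the open immersion $U^{\opp} \times P \hookrightarrow \GSO$ coming from the big cell of the parabolic $P$, the equation can be solved by successive approximation modulo powers of $\gn$: at each stage the residual obstruction lies in $\Lie P$ and must be killed by suitable modification of $p$. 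The key input that makes this converge is that the cocharacter $\mu$ from Lemma \ref{parabolic} acts on $\Lie P$ with non-negative weights while acting on $\Lie U^{\opp}$ with weight $-1$, so conjugation by $b$ (which is essentially $\mu(p)$ times a unit) contracts in the appropriate direction. This is the same mechanism used in Kisin's construction of integral canonical models of Hodge-type Shimura varieties, and is where the bulk of the bookkeeping lives.

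Finally, for the last assertion, both $\Spf \hat R_X$ and $\hat U^{\opp}$ are formally smooth over $W$ of dimension $20$. (For $\hat U^{\opp}$: $\GSO$ has rank $22$ with a parabolic stabilizing an isotropic line, so the unipotent radical of the opposite parabolic has dimension $20$.) It therefore suffices to show that the tautological map induces an isomorphism on tangent spaces at the closed point. The differential of this map is the crystalline Kodaira--Spencer map: it sends a first-order deformation of $X$ to the corresponding first-order variation of the Hodge filtration on $\bL_u$, viewed as an element of $\Lie U^{\opp} \otimes k[\varepsilon]/(\varepsilon^2)$. By the standard identification $H^1(X, T_X) \simeq \Hom(\Fil^2 H^2_{\dR}, H^2_{\dR}/\Fil^2)$ (using triviality of the canonical bundle and the cup product/contraction), this map is an isomorphism, and formal smoothness then upgrades it to an isomorphism of formal schemes.

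The main obstacle is step (2): establishing the existence (and essentially uniqueness) of the adjusting element $p \in P(\hat R_X)$ is a technical deformation-theoretic argument that requires carefully tracking the filtration and the action of $\mathrm{Ad}_b$. Step (1) is formal, and step (3) is a tangent space check; step (2) is where the structure of the $K3$-crystal and the parabolic really enter.
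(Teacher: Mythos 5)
Your overall architecture matches the paper's: normalize the filtration first (your smoothness/lifting argument for the flag variety section is an acceptable substitute for the paper's appeal to rigidity of parabolic subgroup schemes), then put $b_u$ in the form $ub$ by successive $\sigma$-conjugation by elements of $P(\Hat{R}_X)$, and finally identify $\Spf\Hat{R}_X$ with $\Hat{U}^{\opp}$ by a first-order deformation computation. However, step (2) — which you correctly identify as the crux — has two genuine problems. First, you write $c = b_u b^{-1}$ and treat it as an element of $\GSO(\Hat{R}_X)$ reducing to the identity, but a priori $b_u$ and $b$ only lie in $\GSO(\Hat{R}_X[1/p])$ and $\GSO(W[1/p])$ respectively, so the integrality of $c$ is exactly what must be proved before any big-cell decomposition $U^{\opp}\times P\hookrightarrow \GSO$ over $\Hat{R}_X$ can be applied. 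The paper establishes this by showing $b_u \in \SO(\Hat{R}_X)\sigma(\mu)(p)$: one checks that the lattice $\bL_u' = (b_u\sigma)\mu(p^{-1})\bL_u \subset \bL_u$ still carries a perfect pairing (using that $\mu(p^{-1})$ scales $\bilinear$ by $p^{-2}$), forcing $\bL_u' = \bL_u$. Without this input your iteration has nowhere to start.

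Second, your convergence mechanism is misidentified. Conjugation by $b$ (essentially $\mathrm{Ad}_{\sigma(\mu)(p)}$ up to a unit) acts with weight $0$ on the Levi part of $\Lie P$, so it does \emph{not} contract the full residual obstruction in $\Lie P$; only the unipotent radical of $P$ is contracted, and $\Lie U^{\opp}$ is actually expanded. An iteration driven solely by these weights would stall on the Levi component. The actual source of convergence in the paper is the choice of Frobenius lift $\sigma(x_i) = x_i^p$, which gives $\sigma(\gn)\subset \gn^p$: if the correcting element $\lambda\in P(\Hat{R}_X)$ is $\equiv 1 \bmod \gn^m$, then $\sigma(\lambda)\equiv 1 \bmod \gn^{pm}$, so the error after $\sigma$-conjugation is $\equiv 1 \bmod \gn^{pm}$ (the bounded denominators introduced by $\mathrm{Ad}_b$ are absorbed by the rapidly growing power of $\gn$), and the infinite product $\cdots\lambda(2)\lambda(1)$ converges. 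On the final assertion, your tangent-space strategy is essentially the paper's, but note that $\Hom(\Fil^2 H^2\dR, H^2\dR/\Fil^2)$ is $21$-dimensional; the correct target is the isotropic directions, i.e.\ $\Hom(\Fil^2, \Fil^1/\Fil^2)\cong \Lie U^{\opp}$, and in characteristic $p$ the identification of deformations of $X$ over $\Hat{R}_X/(p,\fm^2)$ with isotropic lifts of $\Fil^2\bL$ is not the complex-analytic "standard identification" but the theorem of Nygaard--Ogus, which the paper cites and which your argument also needs.
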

\begin{proof} To show (1), we have to show that $j$ can be chosen so that the parabolics $P \subset \GSO(\bL_u, \bilinear)$ and 
$P_0 \subset \GSO(\bL_0, \bilinear)$ corresponding to the filtrations on $\bL_u$ and $\bL_0$ are identified. This follows from that fact that 
any deformation of $P_0$ to a parabolic in  $\GSO(\bL_u, \bilinear)$ is conjugate to the constant deformation \cite{sga3}. 
The choice of such a $j$ is unique up to conjugation by elements of $P(\Hat{R}_X).$ 

Next let $\mu: \mathbb G_m \rightarrow \GSO$ be a cocharacter corresponding to $P.$ We claim that $b_u \in \SO(\Hat{R}_X)\sigma(\mu)(p).$ 
Let 
$$\bL'_u = (b_u\sigma)\mu(p^{-1}) \bL_u = b_u\sigma(\mu)(p^{-1}) \bL_u.$$ 
The conditions on the filtration in a filtered $K3$-crystal imply that $\bL'_u\subset \bL_u,$ 
and that $\mu(p^{-1})$ acts by $p^{-2}$ on $\bilinear.$ Thus for $x, y \in \bL_u$ we have 
$$ \langle (b_u\sigma)\mu(p^{-1}) x, (b_u\sigma)\mu(p^{-1}) y \rangle = p^2 \sigma(\langle \mu(p^{-1}) x, \mu(p^{-1}) y \rangle) = 
\sigma(\langle x, y \rangle). $$
It follows that $\bilinear$ is perfect on $\bL_u',$ and hence $\bL_u' = \bL_u.$ This proves the claim. 

It follows that $w = b_ub^{-1} \in \SO(\Hat{R}_X).$ Since the map $U^{\opp} \rightarrow P_0\backslash \GSO(\bL_0, \bilinear)$ is an open immersion, and $w$ is the identity mod $\gn,$ we can write $w = \lambda\cdot u$ with $u \in U^{\opp}(\Hat{R}_X)$ and 
$\lambda \in P(\Hat{R}_X)$ both reducing to $1$ mod $\gn.$ Conjugating $j$ by an element of $P(\Hat{R}_X)$ 
has the effect of replacing $b_u =  \lambda u b $ by its $\sigma$-conjugate by the same element. 
Now let $m \geq 1,$ and suppose that $j$ can be chosen so that $b_u = b_u(m) = ub$ modulo $\gn^m,$ so that 
$\lambda = \lambda(m) \equiv 1 \mod \gn^m$. Then, 
$$\lambda^{-1}b_u\sigma(\lambda) = ub\sigma(\lambda) = u (b \sigma(\lambda)b^{-1}) b.$$
Since $\lambda \equiv 1 \mod \gn^n,$ $\sigma(\lambda) \equiv 1 \mod \gn^{pm}$, and hence 
 $b_u(m+1) = \lambda^{-1}b_u(m)\sigma(\lambda) \equiv ub \mod \gn^{pm}.$ This shows that $ub$ is the $\sigma$-conjugate of 
 $b_u(1)$ by the convergent product $\dots \lambda(2)\lambda(1),$ so $j$ can be chosen with $b_u = ub.$
 
It remains to show that the map $u:\Spf \Hat{R}_X \xrightarrow{u} \Hat{U}^{\opp}$ is an isomorphism. Given that these are both smooth 20-dimensional formal schemes over $W$, it suffices to prove this modulo the ideal $(p,\fm^2)$. Now let $S = \Hat{R}_X/(p,\fm^2),$ 
and let $\bL_S$ denote filtered $K3$-crystal over $S$ given by $\bL_u|_S.$ After, modifying our chosen isomorphism $j$ by $u,$ 
$\bL_S$ may be identified with 
$$ (\bL\otimes S, u^{-1}ub\sigma(u),u\cdot (\Fil^\bullet \bL)_S, \bilinear) = (\bL\otimes S, b,u\cdot (\Fil^\bullet \bL)_S, \bilinear) $$
where we have used that $\sigma(u) = 1$ in $S.$ 

Work of Nygaard-Ogus \cite[Theorem 5.2, 5.3]{NO} implies that deformations of $X$ to $S$ correspond bijectively to 
\emph{isotropic lifts} of $\Fil^2 \bL \mod {\fm}.$ These lifts are in bijection with points of $U^{\opp}(S)$ which are $1$ modulo $\fm,$ 
and hence with $ \Hat{U}^{\opp}(S).$ This implies that $u$ induces an isomorphism on $S$ points, and hence is an isomorphism.
\end{proof}

\subsection{}\label{S:Spinstructure}
Let $H$ denote the Clifford module associated to $(T,\bilinear')$, and let $\{s_{\alpha,p}\}_{\alpha}\subset H^{\otimes}$ denote tensors whose pointwise stabilizer is the group $\GSpin$ (such tensors exist by \cite[Proposition 1.3.2]{kisinintegral}) of Spinor similitudes associated to $(T,\bilinear')$. 
\begin{defn}\label{Spinstructure}
 A $\GSpin$-structure on a $p$-divisible group $\hdiv/k$ is the data of an isomorphism $\iota: H\otimes W(k) \rightarrow \D(\hdiv) $, such that 
 $\iota(s_{\alpha,p}) \in \D(\hdiv)^{\otimes}$ are Frobenius-invariant. We say that $\iota_1$ and $\iota_2$ are isomorphic $\GSpin$-structures if 
 $\iota_1(s_{\alpha,p}) = \iota_2(s_{\alpha,p})$.
\end{defn}

\subsection{}\label{S:KugaSatakepdiv}
We now recall a Kuga-Satake construction for $K3$-crystals due to Yang \cite[Appendix A]{ziquansymplectic}, which associates a $p$-divisible group with $\GSpin$ structure to a $K3$-crystal. 

Denote by $\bH$ the Clifford module associated to $(\bL, \bilinear)$. The isomorphism $\iota$ induces a canonical isomorphism of Clifford modules 
$H\otimes W(k)\rightarrow \bH$ (where $H$ is as in Section \ref{S:Spinstructure}), which we shall also denote by $\iota$. 

Let $\mu$ be as in the proof of Proposition \ref{prop:strdef}: a $\GSO(\bL)$-valued co-character, defined over $W(k),$ that induces the mod $p$ filtration on $\bL.$ Let $\mu_{\SO}$ denote the co-character of $\SO$ given by $\mu_{\SO}(z) = z^{-1}\mu(z).$ 
The natural map 
\[
\GSpin(\bH)\rightarrow \SO(\bL) 
\]
has kernel $\G_m,$ and there is a unique lift of $\mu_{\SO}$ to a cocharacter $\tilde \mu$ of $\GSpin(\bH)$ such that $\tilde \mu$ acts 
with weights $0$ and $1$ on $\bH.$ Both weights then occur with equal multiplicity. 
We saw in the proof of Proposition \ref{prop:strdef} that $b \in \GSO(W(k))\sigma(\mu)(p).$ 
Hence $p^{-1}b \in \SO(W(k))\sigma(\mu_{\SO})(p),$ and we may lift $p^{-1}b$ to an element $\tilde{b} \in \GSpin(\bH)(W(k))\sigma(\tilde\mu)(p).$
Then $\tilde{b}\bH \subset \bH$, and $\tilde{b}\bH \not\subset p\bH$.
\begin{proposition}[{\cite[Lemma A.7]{ziquansymplectic}}]
There exists a $p$-divisible group $\hdiv$ over $k$ whose Dieudonn\'e module $\D(\hdiv)$ is given by $\bH$, with Frobenius acting as $\tilde{b}\sigma$.
\end{proposition}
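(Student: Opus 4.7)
The plan is to invoke classical Dieudonn\'e theory over the perfect field $k$, which gives an equivalence between $p$-divisible groups over $k$ and pairs $(M, F)$ consisting of a finite free $W(k)$-module $M$ and a $\sigma$-semilinear endomorphism $F\colon M \to M$ satisfying
$pM \subset F(M) \subset M$. The Verschiebung is then automatically given by $V = p F^{-1}$. Thus it suffices to verify these two inclusions for $M = \bH$ with $F = \tilde b \sigma$, noting that $F(\bH) = \tilde b \cdot \sigma(\bH) = \tilde b \bH$.

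The inclusion $\tilde b \bH \subset \bH$ is exactly one of the hypotheses stated just above the proposition. For the remaining inclusion $p \bH \subset \tilde b \bH$, I would use the factorization $\tilde b = g \cdot \sigma(\tilde \mu)(p)$ with $g \in \GSpin(\bH)(W(k))$, which is available from the description $\tilde b \in \GSpin(\bH)(W(k)) \sigma(\tilde \mu)(p)$. Since $g$ preserves the lattice $\bH$, it suffices to check $p \bH \subset \sigma(\tilde \mu)(p) \bH$. The cocharacter $\sigma(\tilde \mu)$ is defined over $W(k)$ and, by the setup, acts on $\bH$ with weights $0$ and $1$; so $\bH$ splits as a $W(k)$-module direct sum $\bH^{(0)} \oplus \bH^{(1)}$ of its weight eigenspaces. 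Then $\sigma(\tilde \mu)(p)$ acts as $1$ on $\bH^{(0)}$ and as $p$ on $\bH^{(1)}$, yielding $\sigma(\tilde \mu)(p) \bH = \bH^{(0)} \oplus p\, \bH^{(1)}$, which visibly contains $p \bH$.

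There is essentially no substantive obstacle beyond this verification: once both inclusions hold, Dieudonn\'e theory directly produces a $p$-divisible group $\hdiv$ over $k$ with $\D(\hdiv)$ canonically identified with $(\bH, \tilde b \sigma)$. The auxiliary condition $\tilde b \bH \not\subset p \bH$ from the setup plays no role in the existence argument; it simply guarantees that the Frobenius on $\D(\hdiv)$ is not everywhere divisible by $p$, so that $\hdiv$ is not purely isoclinic of slope $1$. This decent-slope property is what makes $\hdiv$ a useful Kuga-Satake companion to the $K3$-crystal $\bL$ in the arguments that follow.
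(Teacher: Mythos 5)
Your argument is correct. The paper itself gives no proof of this statement --- it is quoted directly from Yang's Lemma A.7 --- so there is nothing internal to compare against; your verification of the two lattice inclusions $p\bH \subset \tilde b\bH \subset \bH$ (the first via the weight-$\{0,1\}$ decomposition of $\bH$ under $\sigma(\tilde\mu)$ and the fact that the $\GSpin(W(k))$-factor preserves the lattice, the second quoted from the setup), followed by classical Dieudonn\'e theory over the perfect field $k$, is exactly the standard argument one would expect, and your side remark that $\tilde b\bH \not\subset p\bH$ is not needed for existence (it only rules out $\hdiv$ having everywhere slope $1$) is also accurate.
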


\subsection{}\label{S:CMforK3impliesCMforpdiv}
By \cite[Section 4.2.1]{howardpappas}, if $X$ is supersingular, then $\hdiv$ is a supersingular $p$-divisible group.
Since $\tilde b \in \GSpin(W[1/p]),$ the tensors $s_{\alpha,0} = \iota(s_{\alpha,p})\in \bH^{\otimes}$ are stable by Frobenius, and therefore $\hdiv$ is equipped with a canonical $\GSpin$-structure.

We will now use description of the universal deformation space of $X$ to prove that $\hdiv$ admits a CM lift if $X$ does. 
Write $\Hat{R} = \Hat{R}_X.$ The opposite unipotent of $\tilde{\mu}$ in $\GSpin$ is canonically isomorphic to $U^{\opp}$, the opposite unipotent of $\mu_{\SO}$ in $\SO$. Thus, we may regard $u \in U^{\opp}(\Hat{R})$ as an endomorphism of $\bH \otimes \Hat{R}.$

Let $\tilde{\Fil} \subset \bH$ denote the filtration of $\bH$ induced by $\tilde{\mu}.$ By  \cite[Section 1.5]{kisinintegral}, the data 
$(\bH\otimes{\Hat{R}}, \tilde{\Fil}\otimes \Hat{R}, u\cdot(\tilde b\sigma))$ arises from the Dieudonn\'e module of a 
$p$-divisible group $\hdiv_{\Hat{R}}$ over $\Spf \Hat{R},$ which deforms $\hdiv.$ Note that the tensors $s_{\alpha,0} \in \D(\hdiv_{\Hat{R}})^\otimes$ 
are Frobenius invariant and in $\tilde \Fil^0.$

\begin{lemma}\label{lem:wadm} Let $K/W(k)[1/p]$ be a finite extension, $y: \Hat{R} \rightarrow \cO_K,$ a map of $W(k)$-algebras, and 
$X_y$ (resp.~$\hdiv_y)$ the $K3$ surface (resp.~$p$-divisible group) over $\cO_K$ corresponding to $y.$  Let $D(X_y)$ (resp.~$D(\hdiv_y)$) 
denote the weakly admissible module over $K$ associated to $X_y$ (resp.~$\hdiv_y$). Then there is a canonical inclusion 
$$ D(X_y)(1) \subset D(\hdiv_y)^\otimes$$
compatible with filtrations and Frobenius.
\end{lemma}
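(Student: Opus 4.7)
The plan is to deduce the inclusion from the canonical Clifford-theoretic embedding $\bL \hookrightarrow \End(\bH)$, where $\bH$ is viewed as a left $C(\bL)$-module via the Clifford action. Left multiplication by $v \in \bL \subset C(\bL)$ defines an endomorphism $L_v$ of $\bH$, giving a $W$-linear injection $\bL \hookrightarrow \End(\bH) \subset \bH^{\otimes}$. This will be extended over $\Spf \Hat{R}$ and specialized at $y: \Hat{R} \to \cO_K$ to produce the desired inclusion.

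First I would check compatibility of Frobenius and filtrations at the closed point, i.e.\ for the underlying $K3$-crystal $(\bL, b\sigma)$ and Dieudonn\'e module $(\bH, \tilde{b}\sigma)$. The Frobenius on $\End(\bH)$ is conjugation by $\tilde{b}\sigma$; restricted to $\bL \subset \End(\bH)$ it is the action of the image of $\tilde{b}$ in $\SO(\bL)$, composed with $\sigma$. Since $\tilde{b}$ lifts $p^{-1}b \in \SO$, this restriction equals $p^{-1}b\sigma$, which differs from the Frobenius $b\sigma$ on $\bL$ by a factor of $p$; this factor is absorbed by the Tate twist $(1)$, yielding a Frobenius-compatible inclusion $\bL \hookrightarrow \End(\bH)(1)$. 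For filtrations: the embedding is equivariant for the cocharacter actions of $\mu_{\SO}$ on $\bL$ and $\tilde{\mu}$ on $\bH$ (since conjugation by $\GSpin$ on $\bL$ covers the $\SO$-action), so the induced filtrations match with the analogous shift.

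To globalize to $\Hat{R}$, I would invoke Proposition~\ref{prop:strdef}, which identifies $\Hat{R}$ with $\Hat{U}^{\opp}$ and presents the Frobenius on $\bL_u$ as $u \cdot b\sigma$ for the tautological $u \in U^{\opp}(\Hat{R})$. As observed in Section~\ref{S:KugaSatakepdiv}, $U^{\opp}$ is simultaneously the opposite unipotent of $\mu_{\SO}$ in $\SO$ and of $\tilde{\mu}$ in $\GSpin$, so the same $u$ controls both deformations, and the Frobenius on $\D(\hdiv_{\Hat{R}}) = \bH \otimes \Hat{R}$ is $u \cdot \tilde{b}\sigma$. The closed-point analysis then extends verbatim to give a Frobenius- and filtration-compatible inclusion $\bL \otimes \Hat{R} \hookrightarrow (\End(\bH) \otimes \Hat{R})(1)$ of filtered $\varphi$-modules over $\Hat{R}$. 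Pulling back along $y: \Hat{R} \to \cO_K$ and passing to the associated weakly admissible modules over $K$ produces the canonical inclusion $D(X_y)(1) \subset D(\hdiv_y)^{\otimes}$.

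The main obstacle is the precise bookkeeping of the Tate twist, which hinges on tracking how the central $\G_m$ in the extension $1 \to \G_m \to \GSpin \to \SO \to 1$ mediates between the lifts $b$ and $\tilde{b}$ of Frobenius. Once this compatibility is established at the closed point, propagation over the universal deformation and specialization at $y$ are essentially formal by virtue of the clean parametrization supplied by Proposition~\ref{prop:strdef}.
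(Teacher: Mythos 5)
Your argument is correct and takes essentially the same route as the paper: the inclusion $\bL_u(1) \subset \bH_{\Hat{R}}^{\otimes}$ that the paper invokes ``by construction'' is precisely the Clifford embedding $v \mapsto L_v$ whose Frobenius compatibility (via $\tilde{b}$ lifting $p^{-1}b$, absorbed by the Tate twist) and filtration compatibility (via $\mu_{\SO}$ versus $\tilde{\mu}$) you verify, first at the closed point and then over $\Hat{R}$ using Proposition~\ref{prop:strdef}. The only difference is one of emphasis: the paper leaves the Clifford embedding implicit and instead details the Berthelot--Ogus passage from filtered $F$-crystals over $\Hat{R}$ to weakly admissible modules at $y$ (via the PD-completion and the unique Frobenius-equivariant section), which you compress into ``passing to the associated weakly admissible modules''; both steps are needed and both proofs contain them.
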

\begin{proof} The weakly admissible module $D(X_y)$ is constructed using the crystalline and de Rham cohomology of $X_y,$ and 
the isomorphism between them as in \cite[\S 2]{BerthelotOgus}, and similarly for $D(\hdiv_y).$ Let us briefly recall the construction.

Let $\Hat R_y$ denote the $PD$-completion of $\Hat R$ with respect to $\ker (y) + p\Hat R.$ Then 
$\bL_{\Hat R_y} = \bL_u\otimes_{\Hat R} \Hat R_y$ 
is equipped with a Frobenius and filtration. Moreover, there is a unique Frobenius equivariant map 
$\bL = \bL_0 \rightarrow \bL_{\Hat R_y}[1/p]$ which lifts the identity over $\mathfrak n.$ It may be constructed by choosing any lift of the identity 
$s_0,$ and taking the limit $s = \lim_i \varphi^i(s_0) = \lim_i \varphi^i\circ s_0 \circ\varphi^{-i},$ which converges. This allows 
us to identify $\bL\otimes_{W(k)} K$ with $\bL_y[1/p],$ where $\bL_y = \bL_u\otimes_{\Hat R,y} \cO_K.$ If $K_0 \subset K$ denotes the maximal 
unramified subfield, this gives $D(X_y) \cong \bL\otimes_{W(k)} K_0$ the structure of a weakly 
admissible module over $K.$ Note that this identification is not in general given by the identity of $\bL.$
There is an analogous construction starting with $\bH_{\Hat R}$ in place on $\bL_u.$

Now let $\bL_u(1)$ denote the $K3$-crystal with underlying module $\bL_u,$ equipped with the Frobenius given by $p^{-1}b_u\sigma,$ and the filtration given by 
$\Fil^i \bL_u(1) = \Fil^{i+1}\bL_u.$ By construction, there is an inclusion $\bL_u(1) \subset \bH_{\Hat R}^\otimes.$ 
Applying the above construction to both sides, we obtain $D(X_y)(1) \subset D(\hdiv_y)^\otimes,$ as required.
\end{proof}

\begin{proposition}\label{CMforK3impliesCMforpdiv}
Suppose that the $K3$-surface $X$ admits a CM lift. Then so does the associated Kuga-Satake $p$-divisible group $\hdiv$. 
\end{proposition}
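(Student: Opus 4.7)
The plan is to transfer the given CM lift of $X$ directly through the universal deformation formalism of Sections~\ref{subsec:defmsetup} and~\ref{S:KugaSatakepdiv}. Write $X_y$ for a CM lift of $X$ defined over $\cO_K$ for some finite extension $K/W(k)[1/p]$, classified by a $W(k)$-algebra map $y\colon \widehat{R}_X \to \cO_K$. By the construction of $\hdiv_{\widehat{R}}$ in Section~\ref{S:KugaSatakepdiv}, the same $y$ classifies a canonical deformation $\hdiv_y \defeq \hdiv_{\widehat{R}} \otimes_{\widehat{R},y} \cO_K$ of $\hdiv$ to $\cO_K$, and it suffices to show $\hdiv_y$ is CM. Moreover, since the defining $\GSpin$-tensors $s_{\alpha,0} \in \D(\hdiv_{\widehat{R}})^{\otimes}$ are Frobenius-invariant and lie in $\tilde{\Fil}^{0}$, these properties are preserved under base change along $y$, and so via $D_{\mathrm{crys}}$ they produce Galois-invariant tensors in $V_p(\hdiv_y)^{\otimes} \defeq T_p(\hdiv_y)[1/p]^{\otimes}$ whose pointwise stabilizer is $\GSpin(\bH)$. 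In particular, the Galois image on $V_p(\hdiv_y)$ lies in $\GSpin(\Q_p)$.

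Next, the Kuga--Satake compatibility between $X_y$ and $\hdiv_y$ is used to transfer the CM property. By Lemma~\ref{lem:wadm} there is a Frobenius- and filtration-compatible inclusion $D(X_y)(1) \subset D(\hdiv_y)^{\otimes}$, and the standard equivalence between weakly admissible filtered $\varphi$-modules and crystalline Galois representations yields a Galois-equivariant inclusion $V_p(X_y)(1) \hookrightarrow V_p(\hdiv_y)^{\otimes}$. Unwinding the construction, the Galois action on $V_p(X_y)(1) = H^2_{\et}(X_{y,\bar K}, \Q_p)(1)$ is then the composite of the Galois action on $V_p(\hdiv_y)$ with the standard projection $\GSpin \to \SO(\bL)$. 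Since $X_y$ is CM, its Mumford--Tate group is a torus, so (after a finite extension of $K$ to kill any algebraic monodromy on N\'eron--Severi classes) the Zariski closure $T_{\SO}$ of the Galois image in $\SO(\bL)$ is a torus. Because $\GSpin \to \SO$ has central kernel $\G_m$, the preimage $\widetilde{T}\subset \GSpin$ of $T_{\SO}$ is again a torus, and it contains the Zariski closure of the Galois image on $V_p(\hdiv_y)$.

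To conclude that $\hdiv_y$ is CM one passes to commutants. By Tate's theorem on homomorphisms of $p$-divisible groups, every element of the commutant of the Galois image in $\End_{\Q_p}(V_p(\hdiv_y))$ is realized by an element of $\End(\hdiv_y)\otimes\Q$. Since this Galois image is contained in the torus $\widetilde{T}$, its commutant is a commutative semisimple $\Q_p$-algebra, of dimension equal to $\height(\hdiv_y) = \dim_{\Q_p} V_p(\hdiv_y)$ provided $\widetilde{T}$ acts multiplicity-freely on $V_p(\hdiv_y)$. The principal obstacle is precisely this last multiplicity-freeness, i.e.\ checking that the preimage in $\GSpin$ of the Mumford--Tate torus of $X_y$ acts without multiplicity on the Clifford module underlying $\hdiv_y$. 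Over $\C$ this is a classical property of Kuga--Satake abelian varieties attached to CM Hodge structures of K3 type; in the present $p$-adic setting it must be extracted from the explicit description of $\hdiv_y$ as a twisted form of the Clifford module given in Section~\ref{S:KugaSatakepdiv}, together with the CM type of $X_y$.
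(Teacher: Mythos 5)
Your route is essentially the paper's: you use Lemma~\ref{lem:wadm} to place $D(X_y)(1)$ inside $D(\hdiv_y)^{\otimes}$, the surjection $\GSpin\to\SO$ with central kernel $\G_m$, and the abelianness (Deligne) of the Galois action on $H^2_{\et}$ of a CM $K3$ to conclude that the Zariski closure of the Galois image on $V_p(\hdiv_y)$ lies in a torus $\widetilde{T}\subset\GSpin$; the paper phrases the same argument via Tannakian groups of weakly admissible modules, which is only a cosmetic difference. The problem is your last step. You assert that to produce the CM algebra one needs $\widetilde{T}$ to act multiplicity-freely on $V_p(\hdiv_y)$, and you leave this unverified. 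In fact that condition can never hold here: as a representation of $\GSpin(\bL_{\bar{\Q}_p})$ the Clifford module $\bH_{\bar{\Q}_p}$ is already a direct sum of many copies of the two half-spin representations (each of dimension $2^{10}$, while $\bH$ has rank at least $2^{21}$), so its restriction to any subgroup, in particular to $\widetilde{T}$, has large multiplicities. As written, the sufficient condition you propose to ``extract'' is false, and the proof does not close.

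Fortunately the condition is not needed, because the definition of CM only requires an embedding of \emph{some} commutative semisimple $\Q_p$-algebra $F$ with $\dim_{\Q_p}F=\height(\hdiv_y)$ into $\End(\hdiv_y)\otimes\Q$; it does not require $F$ to be the full commutant of the Galois action. Once the Zariski closure of the Galois image is diagonalizable, write $V_p(\hdiv_y)\cong\oplus_i W_i^{m_i}$ with the $W_i$ pairwise non-isomorphic irreducibles; commutativity forces the image of $\Q_p[G_K]$ in $\End(W_i)$ to be a field $F_i$ with $W_i\cong F_i$, so $\End_{G_K}(V_p(\hdiv_y))\cong\prod_i M_{m_i}(F_i)$. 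Choosing field extensions $E_i/F_i$ of degree $m_i$ and embedding them via the regular representation gives an \'etale subalgebra $F=\prod_i E_i$ of $\Q_p$-dimension $\sum_i m_i[F_i:\Q_p]=\dim_{\Q_p}V_p(\hdiv_y)$, and Tate's theorem identifies $\End_{G_K}(V_p(\hdiv_y))$ with $\End(\hdiv_y)\otimes\Q_p$, so $\hdiv_y$ is CM. With this replacement for your final paragraph the argument is complete and agrees in substance with the paper's proof.
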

\begin{proof} Let $K,K_0$ and $y: \Hat{R} \rightarrow \cO_K,$ be as in the previous lemma, and denote by 
$G(D(X_y)(1))$ and $G(D(\hdiv_y))$ the Tannakian groups of the weakly admissible modules 
$D(X_y)(1)$ and $D(\hdiv_y)^\otimes,$ associated to fiber functor which takes a weakly admissible module to its underlying 
$K_0$-vector space. By construction, we have the following commutative diagram
\[
\begin{xymatrix}
{ G(D(\hdiv_y)) \ar[r]\ar[d] &  G(D(X_y)(1)) \ar[d] \\
\GSpin(\bH_{K_0}) \ar[r] & \SO(\bL_{K_0}(1))
}
\end{xymatrix}
\]
where the vertical maps are the natural inclusions, and the top map is a surjection, which is deduced from Lemma \ref{lem:wadm}. 
Since the bottom map is surjective, with kernel $\G_m,$ it follows that $G(D(\hdiv_y))$ is abelian if and only if $G(D(X_y)(1))$ is abelian. 

Now suppose that $X_y$ is CM. By what we just saw, it suffices to show that $G(D(X_y)(1)),$ or equivalently $G(D(X_y)),$ is abelian. If $\bar K$ denotes an algebraic closure of $K,$ this is equivalent to asking that the $\Gal(\bar K/K)$ action on $H_{\et}^2(X_{y,\bar K},\Q_p) $ is abelian. This 
follows from work of Deligne \cite{deligneK3}. 
\end{proof}

\begin{theorem}\label{thm:K3thm} If $p \geq 5,$ then only finitely many supersingular $K3$-surfaces over $\Fpbar$ admit CM lifts.
\end{theorem}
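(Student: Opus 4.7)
The plan is to combine the Kuga--Satake construction developed in Sections~\ref{subsec:K3setup}--\ref{S:CMforK3impliesCMforpdiv} with the finiteness Theorem~\ref{thm:cmliftpdiv} for CM-liftable $p$-divisible groups within a fixed isogeny class and with Ogus's crystalline Torelli theorem for supersingular $K3$-surfaces.

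First, to each supersingular $K3$-surface $X/\Fpbar$ admitting a CM lift we attach the Kuga--Satake $p$-divisible group $\hdiv$ equipped with its canonical $\GSpin$-structure, as constructed in Section~\ref{S:KugaSatakepdiv}. By \cite[Section 4.2.1]{howardpappas}, $\hdiv$ is supersingular, and by Proposition~\ref{CMforK3impliesCMforpdiv}, $\hdiv$ admits a CM lift. Since all supersingular $p$-divisible groups over $\Fpbar$ of a fixed height lie in a single isogeny class, Theorem~\ref{thm:cmliftpdiv} implies that only finitely many isomorphism classes of such $\hdiv$ can arise as we vary $X$.

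Second, we recover the $K3$-crystal $(\bL,\varphi,\bilinear)$ of $X$ from the pair $(\hdiv,\iota)$. The Frobenius-invariant tensors $\iota(s_{\alpha,p}) \subset \D(\hdiv)^{\otimes}$ single out the $\GSpin$-stabilized subspace, and the quadratic $K3$-crystal (after Tate twist) is obtained as a canonical sub-object of $\D(\hdiv)^{\otimes}$ as in Section~\ref{S:KugaSatakepdiv}. To translate finiteness of $\hdiv$ into finiteness of $K3$-crystals, we need to bound the number of isomorphism classes of $\GSpin$-structures on a fixed $\hdiv$ that can occur; since any two such structures differ by an automorphism of the underlying isocrystal preserving the tensors, and the relevant reductions of structure group form a discrete set on which the compact $p$-adic Lie group $\Aut(\hdiv)$ acts with finitely many orbits, this finiteness follows.

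Finally, under the hypothesis $p \geq 5$, Ogus's crystalline Torelli theorem asserts that a supersingular $K3$-surface over $\Fpbar$ is determined up to isomorphism by its $K3$-crystal. Combining with the previous two steps yields the theorem. The main obstacle is the middle step: one must verify carefully that passing from $\hdiv$ back to the $K3$-crystal introduces only finite ambiguity, since a priori many unrelated $\GSpin$-reductions could in principle arise. The remaining inputs --- Theorem~\ref{thm:cmliftpdiv}, the supersingularity of $\hdiv$, Proposition~\ref{CMforK3impliesCMforpdiv}, and crystalline Torelli --- are applied as black boxes.
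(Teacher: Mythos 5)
Your first and last steps match the paper: attach the Kuga--Satake $p$-divisible group with its $\GSpin$-structure, use Proposition~\ref{CMforK3impliesCMforpdiv} together with Theorem~\ref{thm:cmliftpdiv} (and supersingularity, so all the $\hdiv(X)$ lie in one isogeny class) to get finitely many $\hdiv$, and finish with Ogus's crystalline Torelli theorem. The problem is your middle step, which is exactly the point the paper flags in a footnote as the non-trivial issue (a priori infinitely many non-isomorphic $K3$'s could give the same Kuga--Satake $p$-divisible group). Your justification --- that the $\GSpin$-reductions ``form a discrete set on which the compact $p$-adic Lie group $\Aut(\hdiv)$ acts with finitely many orbits'' --- is an assertion, not an argument. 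The Frobenius-invariant tensors live in a finite-dimensional $\Q_p$-vector space, and the locus of collections $\{s_{\alpha,0}\}$ defining a $\GSpin$-structure is not discrete there; and even granting discreteness, a compact group acting on a discrete set has finite \emph{orbits} but can perfectly well have infinitely many of them. To make this line work you would need to show (i) that the valid tensor collections fall into finitely many orbits of the quasi-isogeny group $J_b(\Q_p)$ (a statement about fibers of $B(\GSpin)\to B(\GL)$), and (ii) that the integral locus inside each orbit is compact while the $\Aut(\hdiv)$-orbits on it are open. None of this is in your sketch.

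The paper takes a different and more robust route around this point: since only finitely many $\hdiv$ occur, they all admit \emph{decent} models over a single finite field $\F_q$; Lemma~\ref{lemma:finitetensor} (decency forces the tensors into $\bH^\otimes$ over $\F_q$, and then Lang's theorem trivializes the $\GSpin$-torsor over $W(\F_q)$) shows every $\GSpin$-structure is already defined over $\F_q$, hence every $K3$-crystal $\bL(X)$ has a decent model over $\F_q$; and finally Ogus's classification of supersingular $K3$-crystals by characteristic subspaces shows there are only finitely many decent $K3$-crystals over a fixed $\F_q$. That last finiteness statement over a finite field replaces your orbit count entirely, and is what you are missing.
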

\begin{proof}
Let $\cS$ denote the set of supersingular $K3$-surfaces over $\Fpbar$ that admit CM lifts. By Proposition \ref{CMforK3impliesCMforpdiv}, the Kuga-Satake $p$-divisible group $\hdiv(X)$ associated to every $X\in \cS$ has a CM lift. Let $\cS'$ denote the set $\{\hdiv(X): \ X\in \cS\}$. Theorem \ref{thm:cmliftpdiv} implies that $\cS'$ contains only finitely\footnote{Corollary \ref{C:introK3thm} does not follow immediately  -- it is a-priori possible that infinitely many non-isomorphic $K3$-surfaces $X$ might yield the same Kuga-Satake $p$-divisible group.} many $\Fpbar$-isomorphism classes of $p$-divisible groups, and therefore there exists a finite field $\F_q$ such that every $\hdiv \in \cS'$ admits a decent model over $\F_q$, (which we will again denote by $\hdiv / \F_q$). By Lemma \ref{lemma:finitetensor} below, every $\GSpin$-structure on $\hdiv$ is defined over $\F_q$. Since the data of a $p$-divisible group $\hdiv$ along with a $\GSpin$-structure uniquely determines the $K3$-crystal, it follows that the $K3$-crystal $\bL(X)$ admits a decent model over $\F_q$ for every $X\in \cS$. 

By Ogus' crystalline Torelli theorem (the main result of \cite{ogustorelli}), it suffices to prove that there are only finitely many isomorphism classes of decent $K3$-crystals defined over any fixed finite field $\F_q$. This is a direct consequence of the discussion in \cite[Section 3, Definition 3.19, Theorem 3.20]{ogusk3} pertaining to characteristic subspaces, and therefore the result follows.
\end{proof}

\begin{lemma}\label{lemma:finitetensor}
Let $\hdiv /\F_q $ denote a decent $p$-divisible group. Then, every $\GSpin$-structure on $\hdiv_{\Fpbar}$ is defined over $\F_q$. 
\end{lemma}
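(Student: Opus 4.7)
The plan is to show that the tensors $\iota(s_{\alpha,p}) \in \D(\hdiv_{\Fpbar})^{\otimes}$ are invariant under the natural $\Gal(\Fpbar/\F_q)$-action on the second factor of $\D(\hdiv_{\Fpbar}) = \D(\hdiv) \otimes_{W(\F_q)} W(\Fpbar)$, which then forces them to lie in $\D(\hdiv)^{\otimes}$ and exhibits $\iota$ (up to isomorphism of $\GSpin$-structure) as the base change of one defined over $W(\F_q)$. To attack this, set $q = p^r$ and let $F$ denote the $W(\F_q)$-linear $q$-Frobenius on $\D(\hdiv_{\Fpbar})$ obtained by base change from $\varphi^r$ on $\D(\hdiv)$, and let $\gamma_0$ be the topological generator of $\Gal(\Fpbar/\F_q)$. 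Then $\varphi^r = F \circ \gamma_0$, so any $\varphi$-invariant tensor $t$ satisfies $\gamma_0(t) = F^{-1}(t)$, and Galois-invariance of $t$ is equivalent to $F(t) = t$.

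Next I would exploit decency: $F$ acts semisimply on $V := \D(\hdiv)[1/p]$ and on its tensor algebra with eigenvalues that are rational powers of $q$. Since $\iota(s_{\alpha,p})$ is $\varphi$-invariant, it lies in the slope-$0$ part of the base change of $V^{\otimes}$ to $W(\Fpbar)[1/p]$, where $F$ has roots-of-unity eigenvalues. After possibly enlarging $\F_q$ so that these roots of unity lie in $k_0 := W(\F_q)[1/p]$, the slope-$0$ part inherits a $k_0$-rational decomposition into $F$-eigenspaces, and it suffices to show that $\iota(s_{\alpha,p})$ has vanishing projection onto the $F = \zeta$ eigenspace for every $\zeta \ne 1$, i.e.\ that $F(\iota(s_{\alpha,p})) = \iota(s_{\alpha,p})$.

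This final step is where I expect the main obstacle. Conjugating through $\iota$, write $\iota^{-1}\varphi\iota = g\sigma$ with $g \in \GSpin(W(\Fpbar))$ and $\iota^{-1}\gamma_0\iota = g_{\gamma_0}\sigma^r$ for some $g_{\gamma_0} \in \GL(H\otimes W(\Fpbar))$; one then computes $\iota^{-1} F \iota = G\, g_{\gamma_0}^{-1}$, where $G := g\sigma(g)\cdots\sigma^{r-1}(g) \in \GSpin(W(\Fpbar))$. Because $G$ stabilizes each $s_{\alpha,p}$, the condition $F(\iota(s_{\alpha,p})) = \iota(s_{\alpha,p})$ reduces to $g_{\gamma_0} \in \GSpin(W(\Fpbar))$. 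The hard work lies in deducing this containment from the commutation relation $g\,\sigma(g_{\gamma_0}) = g_{\gamma_0}\,\sigma^r(g)$ forced by $\varphi\gamma_0 = \gamma_0\varphi$, combined with Lang's theorem applied to the connected reductive group $\GSpin$ (which kills the relevant Galois cohomology so that any Galois twist of the $\GSpin$-torsor $\iota$ is trivializable by a $\GSpin$-element not moving the tensors) together with the constraint imposed by decency on the $\sigma$-conjugacy class of $g$ via Rapoport--Zink.
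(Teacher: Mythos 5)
Your overall architecture matches the paper's: show that the tensors $s_{\alpha,0}=\iota(s_{\alpha,p})$ are fixed by the linearized $q$-Frobenius $F$, deduce that they descend to $\bH^{\otimes}=\D(\hdiv)^{\otimes}$, and conclude. But the proposal stops exactly at the step you yourself flag as ``the main obstacle'': you never prove $F(s_{\alpha,0})=s_{\alpha,0}$, and the route you sketch for it (the commutation relation between $g$ and $g_{\gamma_0}$, a Galois-cohomology argument, ``the constraint imposed by decency on the $\sigma$-conjugacy class via Rapoport--Zink'') is left uncarried out and is in any case far heavier than what is needed. The missing observation is that decency settles this almost immediately: for the (supersingular, isoclinic of slope $1/2$) $p$-divisible groups at hand, semisimplicity of $\Frob_q$ with eigenvalues that are rational powers of $q$ forces $\Frob_q$ to act on $\bH$ as the scalar $q^{1/2}$ (after possibly enlarging $q$ to kill roots of unity), hence on each graded piece $\bH^{\otimes m}\otimes(\bH^{\vee})^{\otimes n}$ as the scalar $q^{(m-n)/2}$. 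A $\varphi$-invariant tensor lies in the slope-zero part, which meets only the pieces with $m=n$, where $F$ is the identity. There is no eigenspace projection to control and no analysis of $g_{\gamma_0}$ required.

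There is a second, smaller gap: your opening sentence asserts that once the tensors lie in $\D(\hdiv)^{\otimes}$, this already ``exhibits $\iota$ as the base change of one defined over $W(\F_q)$.'' Descent of the tensors does not by itself produce an isomorphism $H\otimes W(\F_q)\to\bH$ carrying $s_{\alpha,p}$ to $s_{\alpha,0}$. The paper closes this by noting that the scheme of such isomorphisms is a $\GSpin$-torsor over $W(\F_q)$ (it has a $W(\Fpbar)$-point, namely $\iota$) and that Lang's lemma for the connected group $\GSpin$ trivializes it. You do invoke Lang's theorem, but in the service of the unfinished $g_{\gamma_0}$ computation rather than at this, its actual point of use.
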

\begin{proof}
Let $\bH = \mathbb{D}(\hdiv),$  and $\iota: H\otimes_WW(\Fpbar)\rightarrow \bH_{W(\Fpbar)}$ a $\GSpin$-structure on $\hdiv$. Let $s_{\alpha,0}  = \iota(s_{\alpha,p})\in \bH^{\otimes}_{W(\Fpbar)}.$  Write $\varphi$ for the Frobenius on $\bH.$ 
By definition, the $s_{\alpha,0}$ are Frobenius-invariant tensors.    
The assumption that $\hdiv$ is decent implies that the $q$-power Frobenius acts on $\bH = \mathbb{D}(\hdiv)$ as the scalar $q^{1/2}$. 
As $s_{\alpha,0}$ is $\varphi\otimes\sigma$-invariant, it lies in the slope $0$ part of 
$\bH_{W(\Fpbar)}[1/p].$ Hence, if $q = p^r,$ then $\varphi^r$ acts trivially on $s_{\alpha,0},$ and so $\sigma^r$ acts trivally on $s_{\alpha,0}.$ 
Hence, $s_{\alpha,0} \in \bH^\otimes.$ 

Now consider the $W(\F_q)$-scheme that represents the functor, which assigns to an $W(\F_q)$-algebra $R$ the set of 
isomorphisms $H\otimes R \rightarrow \bH \otimes R$ that sends $s_{\alpha,p}$ to $s_{\alpha,0}.$ This scheme is a $\GSpin_W$-torsor as it has a point (given by the isomorphism $\iota$) defined over $W(\Fpbar)$. As $\GSpin$ is a connected reductive group, Lang's lemma  yields that the torsor must be trivial over $W(\F_q)$, and hence there exists an isomorphism $\iota_q: H\otimes W(\F_q) \rightarrow \bH$ that respects tensors. It follows that the $\GSpin$-structure is indeed defined over $\F_q$, as claimed 
\footnote{We note that the same proof goes through to prove that every $G$-structure on $\hdiv_{\Fpbar}$ descends to $\hdiv$ whenever $G/\Z_p$ is a connected reductive group. Without the connectedness assumption, the same result holds upto replacing $\F_q$ with a finite extension where the degree of the extension depends only on the component set of $G$.}. 
\end{proof}

\bibliography{HeckeReduction.bib}

\end{document}